\newtheorem{theorem}{Theorem}[section]
\newtheorem{corollary}[theorem]{Corollary}
\newtheorem{lemma}[theorem]{Lemma}
\newtheorem{question}[theorem]{Question}
\theoremstyle{definition}
\theoremstyle{remark}
\newtheorem{remark}{Remark}[section]
\theoremstyle{remark}
\long\def\symbolfootnote[#1]#2{\begingroup\def\thefootnote{\fnsymbol{footnote}}
\footnote[#1]{#2}\endgroup}
\begin{document}

\title{On sumfree subsets of hypercubes}
\author{Daniel J. Katz}

\maketitle

\abstract{We consider the possible sizes of large sumfree sets contained in 
the discrete hypercube $\{1,\ldots,n\}^k$, and we determine upper and lower 
bounds for the maximal size as $n$ becomes large. We also discuss a 
continuous analogue in which our lower bound remains valid and our upper 
bound can be strengthened, and we consider the generalization of both problems
to $l$-fold-sumfree sets.}
\symbolfootnote[0]{2000 Mathematics Subject Classification: 11B75.}

\section{Introduction}

Given an additive group $Z$, we refer to $A \subset Z$ as a sumfree set
if $x+y\neq z$ for all $x,y,z \in A$. (Equivalently, using the notation of
sumsets, $A$ is sumfree if $(A + A) \cap A = \emptyset$.) These sets have been of interest since
at least 1916, when Schur \cite{Schur1916} proved that the positive integers could not be 
partitioned into finitely many such sets.

A common problem in this topic is as follows: given a particular additive group
$Z$ (or perhaps a subset $Z$ of an additive group), how large can a sumfree
subset of $Z$ be, and further, what sort of structure do large sumfree subsets 
have? This problem has been considered for $Z = \mathbb{Z}_{>0}$
\cite{MR1441239, MR1701193},  
$\mathbb{Z}/p \mathbb{Z}$ \cite{MR2254541},
general finite groups (abelian \cite{MR2166359} 
and non-abelian \cite{MR1656927}),
and $\{1,\ldots,n\} \subset \mathbb{Z}$ for arbitrary (usually large) $n$
\cite{MR2134182, MR2142128}. 

The last of these cases suggests a study of 
the ``discrete hypercube'' $Z = \{1,\ldots,n\}^k \subset \mathbb{Z}^k$ for $k > 1$.
In particular, we would like to know how proportionately large a sumfree subset of
$\{1,\ldots,n\}^k$ can be when $n$ is large. For this purpose, we define
$$c_k := \limsup_{n \to \infty} \frac{1}{n^k} \max \{\#S:S \in \{1,\ldots,n\}^k \mbox{ is
sumfree}\}.$$

Previous work on sumfree subsets of $\{1,\ldots,n\}$ has shown that
$c_1 = 1/2$. (The set of odd elements, for example, is optimally large
for all $n$.) Let $S$ be a sumfree subset of $\{1,\ldots,n\}^k$ of size $\alpha n^k$, 
and let $k' > k$. The inverse image $S'$ of a natural projection from 
$\{1,\ldots,n\}^{k'}$ to $\{1,\ldots,n\}^k$ is also sumfree, and has size 
$\alpha n^{k'}$. Using this fact, it is clear that $c_{k'} \geq c_k$ for $k' > k$,
and thus $1/2 \leq c_k \leq 1$ for all $k$.

The largest sumfree subsets we have observed in the square
$\{1,\ldots,n\}^2$ take the form of thick diagonal ``stripes''; generalizing this
construction, we can construct large sumfree subsets in $\{1,...,n\}^k$ and
thus prove a general lower bound for $c_k$. 

\begin{theorem}
Defining $c_k$ as above,
$$c_k \geq 1 - \frac{2}{k!}\sum^{\lfloor k/3 \rfloor}_{i=0} (-1)^i \binom{k}{i} \left(\frac{k}{3}-i\right)^k.$$
\end{theorem}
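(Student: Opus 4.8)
The plan is to exhibit, for each $n$, an explicit sumfree ``diagonal stripe'' $S_n\subseteq\{1,\ldots,n\}^k$ and to show that $n^{-k}\#S_n$ tends to the asserted quantity. The model is the square: $\{(x,y):\tfrac{2n}{3}<x+y\le\tfrac{4n}{3}\}$ is sumfree because any sum of two of its elements has coordinate-sum exceeding $\tfrac{4n}{3}$. Generalizing, put $\sigma(x)=x_1+\cdots+x_k$ and set
$$S_n:=\Bigl\{x\in\{1,\ldots,n\}^k:\tfrac{kn}{3}<\sigma(x)\le\tfrac{2kn}{3}\Bigr\}.$$
The constants are chosen so that $2\cdot\tfrac{kn}{3}=\tfrac{2kn}{3}$: if $x,y\in S_n$ then $\sigma(x+y)=\sigma(x)+\sigma(y)>\tfrac{2kn}{3}\ge\sigma(z)$ for every $z\in S_n$, hence $x+y\ne z$, and $S_n$ is sumfree (the case $x=y$ included). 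One may replace the endpoints by integers without changing anything that follows, provided the upper endpoint stays at most twice the lower one so that this inequality survives.

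Next I would pass to a volume. Let $R_k:=\{u\in[0,1]^k:\tfrac{k}{3}<u_1+\cdots+u_k\le\tfrac{2k}{3}\}$. Its boundary lies in the union of two hyperplanes with $\partial([0,1]^k)$, so $R_k$ is Jordan-measurable; since the points $\tfrac1n x$, $x\in\{1,\ldots,n\}^k$, equidistribute in the cube, a routine sandwiching gives $n^{-k}\#S_n\to\operatorname{vol}(R_k)$. Consequently $c_k\ge\limsup_n n^{-k}\#S_n=\operatorname{vol}(R_k)$, and the theorem reduces to evaluating this volume.

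For the evaluation, note that the measure-preserving reflection $u\mapsto\mathbf 1-u$ of $[0,1]^k$ carries $\{u:\sigma(u)>\tfrac{2k}{3}\}$ onto $\{u:\sigma(u)<\tfrac{k}{3}\}$, so these two caps are equal in volume; as the cube is their disjoint union with $R_k$ up to a null set, $\operatorname{vol}(R_k)=1-2F(\tfrac{k}{3})$, where $F(t):=\operatorname{vol}\{u\in[0,1]^k:u_1+\cdots+u_k\le t\}$. The classical hypercube-slice (Irwin--Hall) formula gives, for $0\le t\le k$, $F(t)=\tfrac{1}{k!}\sum_{i=0}^{\lfloor t\rfloor}(-1)^i\binom{k}{i}(t-i)^k$; one derives it by inclusion--exclusion, starting from the simplex volume $t^k/k!$ and subtracting, for each coordinate $j$, the part of the simplex with $u_j>1$ (which, after the shift $u_j\mapsto u_j-1$, is again a simplex). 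Taking $t=\tfrac{k}{3}$ yields $F(\tfrac{k}{3})=\tfrac{1}{k!}\sum_{i=0}^{\lfloor k/3\rfloor}(-1)^i\binom{k}{i}(\tfrac{k}{3}-i)^k$, and therefore $c_k\ge\operatorname{vol}(R_k)=1-\tfrac{2}{k!}\sum_{i=0}^{\lfloor k/3\rfloor}(-1)^i\binom{k}{i}(\tfrac{k}{3}-i)^k$, as claimed.

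I do not expect a genuine obstacle here. The two points needing care are: getting the stripe's endpoints right so the sumfree check in the first paragraph actually closes (the upper-to-lower ratio must be exactly $2$, which is what pins down $\tfrac{k}{3}$ and $\tfrac{2k}{3}$), and keeping track of the summation range $\lfloor t\rfloor$ in the slice formula, i.e.\ which of the ``$u_j>1$'' corrections are nonempty. The equidistribution limit and the reflection symmetry are both standard, so the full write-up should be short.
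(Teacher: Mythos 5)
Your proposal is correct and follows essentially the same route as the paper: the same diagonal stripe $\{x : kn/3 \le \sigma(x) \le 2kn/3\}$ (up to boundary conventions), the same lattice-point-to-volume approximation, and the same hypercube-slice (Irwin--Hall) formula, with your reflection argument playing the role of the paper's observation that $V_1(k,k/3)=V_2(k,k/3)$. The only cosmetic differences are that you sketch the inclusion--exclusion derivation of the slice formula and invoke equidistribution where the paper cites Feller and a lattice-point theorem of Lang, respectively.
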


Analysis of this lower bound yields:

\begin{corollary}
Defining $c_k$ as above,
$$\lim_{k \to \infty} c_k = 1.$$
\end{corollary}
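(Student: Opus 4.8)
The plan is to show that the quantity subtracted from $1$ in the Theorem's lower bound tends to $0$ as $k\to\infty$; since the trivial upper bound $c_k\le 1$ is already established in the introduction, the squeeze theorem then forces $c_k\to 1$. The first step is to reinterpret the algebraic sum
$$\frac{1}{k!}\sum_{i=0}^{\lfloor k/3\rfloor}(-1)^i\binom{k}{i}\left(\frac{k}{3}-i\right)^k$$
probabilistically. If $U_1,\dots,U_k$ are independent random variables uniformly distributed on $[0,1]$ and $S_k=U_1+\cdots+U_k$, then the classical Irwin--Hall formula for the cumulative distribution function of a sum of uniforms states that
$$\Pr[S_k\le x]=\frac{1}{k!}\sum_{j=0}^{\lfloor x\rfloor}(-1)^j\binom{k}{j}(x-j)^k \qquad (0\le x\le k).$$
Taking $x=k/3$ shows that the Theorem's inequality reads simply $c_k\ge 1-2\Pr[S_k\le k/3]$, so it suffices to prove $\Pr[S_k\le k/3]\to 0$.

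Next I would estimate this tail probability by concentration. Since $\mathbb{E}[S_k]=k/2$ and $\mathrm{Var}(S_k)=k/12$, the event $\{S_k\le k/3\}$ is contained in the event $\{|S_k-k/2|\ge k/6\}$ — a deviation of order $k$ from the mean against a standard deviation of order $\sqrt{k}$. Chebyshev's inequality then gives
$$\Pr[S_k\le k/3]\le\Pr\!\left[\,\Bigl|S_k-\tfrac{k}{2}\Bigr|\ge\tfrac{k}{6}\,\right]\le\frac{k/12}{(k/6)^2}=\frac{3}{k}.$$
Hence $c_k\ge 1-6/k$ for all $k$, and combining with $c_k\le 1$ and letting $k\to\infty$ completes the proof. (If a sharper rate were desired, Hoeffding's inequality would replace $3/k$ by something exponentially small in $k$, but the crude Chebyshev bound already suffices.)

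The only mild obstacle is justifying the identification of the alternating sum with $\Pr[S_k\le k/3]$; this is a standard fact, provable by induction on $k$ or by inclusion--exclusion applied to the volume of the truncated simplex $\{u\in[0,1]^k:\sum_i u_i\le x\}$, and once it is available the remaining argument is a one-line concentration estimate. An alternative approach would bound the alternating sum directly by analytic means, but the probabilistic reading makes both the alternating sign pattern and the decay transparent, so I would present it in that language.
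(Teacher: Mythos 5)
Your proof is correct, but it takes a genuinely different route from the paper. You observe that the alternating sum in Theorem 1.1 is exactly the volume of $\{u\in[0,1]^k:\ u_1+\cdots+u_k\le k/3\}$ (this is the paper's own Lemma 2.1, equivalently the Irwin--Hall distribution function), i.e.\ $\Pr[S_k\le k/3]$ for a sum $S_k$ of $k$ independent uniforms, and then you kill this tail by Chebyshev: with $\mathbb{E}[S_k]=k/2$ and $\mathrm{Var}(S_k)=k/12$ you get $\Pr[S_k\le k/3]\le 3/k$, hence $1-6/k\le c_k\le 1$ and the limit follows. The paper instead works with the alternating sum directly: it splits the range $\{0,\ldots,\lfloor k/3\rfloor\}$ into finitely many blocks determined by a recursively defined sequence $a_0=1/3$, $a_{i+1}=1/3-a_i^{a_i}(1-a_i)^{1-a_i}/e$ (checking numerically that $a_7<0$), and shows via Stirling's formula (Lemma 3.1) that each block of terms tends to zero in absolute value. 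Your argument is shorter, avoids the ad hoc sequence and the Stirling estimates, and yields an explicit quantitative rate $c_k\ge 1-6/k$ (exponential in $k$ if one uses Hoeffding instead of Chebyshev), whereas the paper's proof establishes only the qualitative limit; the paper's approach, on the other hand, stays entirely within elementary termwise estimates of the binomial sum and needs no probabilistic reinterpretation beyond the volume formula it already quotes. Both are valid; yours is arguably the cleaner derivation given that Lemma 2.1 is already available.
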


We also prove a general upper bound for $c_k$ using a
combinatorial method, although it is difficult to write this bound as an 
explicit function of $k$. 

\begin{theorem}
Defining $c_k$ as above, let $\alpha^*$ be the unique root in $[1/2,1]$ of the equation
$$\alpha = (2-2\alpha)\left(1+ \sum^k_{i=0} \frac{1}{i!}\left(\ln \frac{1}{2-2\alpha}\right)^i\right).$$
Then $$c_k \leq \alpha^*.$$
\end{theorem}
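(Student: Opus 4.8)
The plan is to combine a reflection (``doubling'') argument with a volume estimate for the product‑bounded region that must contain a large sumfree set. The starting point is the basic reflection inequality: if $S\subseteq\{1,\dots,n\}^k$ is sumfree and $p\in S$, then the map $x\mapsto p-x$ sends $S\cap\prod_i[1,p_i]$ injectively into the complement of $S$, since $x$ and $p-x$ both lying in $S$ would exhibit the forbidden relation $x+(p-x)=p$ with all three terms in $S$. As $S\cap\prod_i[1,p_i]$ and its image both lie in $\prod_i\{0,1,\dots,p_i\}$, this gives $\#\bigl(S\cap\prod_i[1,p_i]\bigr)\le\tfrac12\prod_i(p_i+1)$; so $S$ has density at most $\tfrac12+O(1/n)$ inside the box below any of its points, at least when all $p_i$ are $\Theta(n)$.

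Now set $\alpha=\#S/n^k$ and pick $p\in S$ maximizing $\prod_i x_i$, with $V=\prod_i p_i$ and $\gamma=V/n^k$. Two facts drive the proof. First, $S\subseteq R_V:=\{x\in\{1,\dots,n\}^k:\prod_i x_i\le V\}$, and the substitution $x_i=ne^{-u_i}$ turns $\prod x_i\le V$ into $\sum u_i\ge\ln\tfrac1\gamma$ for independent unit exponentials, so $\#R_V/n^k\to\operatorname{vol}\{z\in[0,1]^k:\prod z_i\le\gamma\}=\gamma\sum_{i=0}^{k-1}\tfrac1{i!}\bigl(\ln\tfrac1\gamma\bigr)^i$, a Gamma$(k,1)$ (equivalently Poisson) tail — this is the source of the partial exponential sum in the statement. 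Second, applying the reflection inequality to the same $p$, the complement of $S$ has density $\ge\tfrac12-o(1)$ inside $\prod_i[1,p_i]$, so $(1-\alpha)n^k\ge\tfrac12V(1-o(1))$, i.e.\ $\gamma\le 2-2\alpha+o(1)$ — this is where the quantity $2-2\alpha$ enters. (Note also that $\alpha\le\#R_V/n^k$ forces $\gamma$ bounded away from $0$ once $\alpha\ge\tfrac12$, which is guaranteed by $c_k\ge\tfrac12$; hence each coordinate of $p$ is $\Theta(n)$, which is what the reflection step above requires.)

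Splitting $S=\bigl(S\cap\prod_i[1,p_i]\bigr)\sqcup\bigl(S\setminus\prod_i[1,p_i]\bigr)$, bounding the first piece by $\tfrac12V$ via reflection and the second by $\#R_V-V$ via containment, dividing by $n^k$ and passing to the limit along a near‑extremal sequence, I would obtain an inequality of the shape $\alpha\le\gamma\bigl(\sum_{i=0}^{k-1}\tfrac1{i!}(\ln\tfrac1\gamma)^i-\tfrac12\bigr)$ together with $\gamma\le 2-2\alpha$; since $\gamma\mapsto\operatorname{vol}\{\prod z_i\le\gamma\}$ is increasing, feeding in $\gamma\le 2-2\alpha$ converts this into a relation in $\alpha$ alone, and because the functions involved are monotone in the right direction it can hold only for $\alpha\le\alpha^*$. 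To recover the precise form $\alpha=(2-2\alpha)\bigl(1+\sum_{i=0}^k\tfrac1{i!}(\ln\tfrac1{2-2\alpha})^i\bigr)$ in the statement one expects to need a slightly more conservative accounting — for instance using the bounding box $\prod_i[1,m_i]$, $m_i=\max_{x\in S}x_i$, in place of the product‑maximal point, or a cruder disjointness bound in place of the sharp factor $\tfrac12V$ — which supplies the extra ``$1+$'' and the last term of the sum.

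The genuinely delicate parts are two. One is the limiting step: along $S_n$ with $\#S_n/n^k\to c_k$ one must show $\gamma_{S_n}$ converges, after passing to a subsequence, to a positive limit, and transfer the discrete inequalities to the limit uniformly in $n$. The other is the constant chasing: controlling the $o(n^k)$ errors coming from the faces of the cube and choosing the reflection centre so that the partial exponential sum terminates at exactly the right index with exactly the constant ``$\tfrac32$'' (equivalently the ``$+1$'' and the ``$2-2\alpha$'') in the final equation. Everything else is elementary; the main obstacle is doing these two steps cleanly enough to land on $\alpha^*$ itself rather than merely on a bound of the same shape.
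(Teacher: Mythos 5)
Your proposal is correct and is essentially the paper's own argument: the reflection/pairing step showing that at most half of the box below any point of $S$ can lie in $S$ forces every element's coordinate product to be at most $(2-2\alpha)n^k + O(n^{k-1})$, the measure of the resulting product-bounded region is exactly the truncated exponential sum of the paper's Lemma 2.2, and a monotonicity argument then yields the unique root and the bound. Anchoring at the product-maximal point of $S$ and deriving the volume as a Gamma/Poisson tail are only cosmetic variations, and your slightly sharper inequality (the extra $-\gamma/2$ term, which you can simply drop) still implies the theorem as stated, since the stated equation's root is at least as large as the one your relation produces.
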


Our approach to the upper bound depends on the idea that if an element 
of a sumfree subset $S \in \{1,\ldots,n\}^k$ is the sum of many pairs of elements, 
none of these pairs can be in $S$. This means that if  $S$ contains a certain 
proportion of the full set, a certain number of elements cannot belong to $S$, which 
causes a contradiction if the proportion is large. 

%

To give an idea of the distance between our lower and upper bounds, here are the approximate
bounds given by these theorems for $2 \leq k \leq 6$:

\begin{eqnarray*}
0.555556 \leq & c_2 & \leq 0.913875 \\
0.666667 \leq & c_3 & \leq 0.942361 \\
0.740741 \leq & c_4 & \leq 0.961192 \\
0.796639 \leq & c_5 & \leq 0.973763 \\
0.838889 \leq & c_6 & \leq 0.982208 
\end{eqnarray*}

Our calculations for both the lower and upper bounds involve approximating 
numbers of lattice points in $\{1,\ldots,n\}^k$ by integrating over subsets of
$[0,n]^k$. This approximation is less than exact, but the error
becomes trivial compared to $n^k$ when $n$ is large, and thus it
ultimately does not affect the value of $c_k$. These integrals become more 
complicated as $k$ grows, but they can be calculated explicitly by induction, 
where counting the lattice points directly becomes cumbersome in higher 
dimensions.

This integral method actually suggests a non-discrete version of the problem:
maximizing the volume of Lebesgue-measurable sumfree subsets of the 
``continuous hypercube'' $[0,1]^k \subset \mathbb{R}^k$. We will see that the bounds 
we calculated in Theorems 1.1 and 1.3 hold in this setting, and in fact the upper 
bound can be improved by applying an iteration process.

We will also discuss some results that generalize our processes to
$l$-fold-sumfree sets; that is, sets $A$ such that $x_1 + \cdots + x_l \neq z$ for 
all $x_1, \ldots, x_l, z \in A$. The lower bound for sumfree sets extends easily to
$l > 2$; the upper bound is difficult to apply when $l > 4$, but interestingly in the
$l=3$ case it gives a bound which is explicit rather than the root of an equation.

Finally, we will present some concluding remarks, suggesting two divergent paths
for future investigation in the subject.

\section{Introductory lemmas}

In order to bound the constants under consideration, we will need the following volume formula.

\begin{lemma}
Given $a \in [0,k]$, the volume of the region
$$\{(x_1,\ldots,x_k) \in [0,1]^k:x_1 + \cdots + x_k \leq a\} \subset \mathbb{R}^k$$
is equal to
$$\frac{1}{k!}\sum^{\lfloor a \rfloor}_{i=0} (-1)^i \binom{k}{i} (a-i)^k.$$
\end{lemma}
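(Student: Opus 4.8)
The plan is to realize the region as the standard corner simplex of side $a$ with the $k$ upper constraints $x_j \le 1$ imposed, and to remove the offending pieces by inclusion--exclusion. Write $T(a) = \{x \in \mathbb{R}^k : x_j \ge 0 \text{ for all } j,\ x_1 + \cdots + x_k \le a\}$; this is a scaled copy of the standard simplex, with $\mathrm{vol}(T(a)) = a^k/k!$ for $a \ge 0$. Since $T(a)$ already encodes nonnegativity and the sum bound, the region in the statement is $R(a) = T(a) \setminus \bigcup_{j=1}^k B_j$, where $B_j = T(a) \cap \{x_j > 1\}$ is the part of $T(a)$ violating the $j$-th upper bound.

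First I would compute $\mathrm{vol}\bigl(\bigcap_{j \in I} B_j\bigr)$ for a fixed $I \subseteq \{1,\ldots,k\}$ with $|I| = i$. This intersection is $\{x : x_j \ge 0\ \forall j,\ x_j > 1\ \forall j \in I,\ \sum_j x_j \le a\}$, and the volume-preserving affine change of variables $y_j = x_j - 1$ for $j \in I$, $y_j = x_j$ otherwise, carries it (up to a measure-zero boundary introduced by the strict inequalities) onto $T(a-i)$. Hence its volume is $(a-i)^k/k!$ when $a - i > 0$ and $0$ when $a - i \le 0$; in particular it vanishes precisely when $i > \lfloor a \rfloor$, the case $i = a \in \mathbb{Z}$ being a measure-zero simplex, consistent with $(a-i)^k = 0$. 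Inclusion--exclusion then gives
$$\mathrm{vol}(R(a)) = \sum_{I \subseteq \{1,\ldots,k\}} (-1)^{|I|} \mathrm{vol}\Bigl(\bigcap_{j \in I} B_j\Bigr) = \frac{1}{k!} \sum_{i=0}^{\lfloor a \rfloor} (-1)^i \binom{k}{i} (a-i)^k,$$
where the outer sum collapses to a sum over $|I| = i$ with $0 \le i \le \lfloor a \rfloor$ by the computation above, and every binomial coefficient appearing is genuine since $a \le k$ forces $\lfloor a \rfloor \le k$.

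The only real obstacle is the measure-theoretic bookkeeping: checking that each $B_j$ is a measurable set of finite volume and that replacing $x_j > 1$ by $x_j \ge 1$ changes nothing; verifying that the finite inclusion--exclusion identity $\mathrm{vol}\bigl(U \setminus \bigcup_j B_j\bigr) = \sum_I (-1)^{|I|}\mathrm{vol}\bigl(\bigcap_{j\in I} B_j\bigr)$ applies to these sets; and, most importantly, making sure the terms with $i > \lfloor a \rfloor$ are discarded rather than incorrectly evaluated as $(a-i)^k/k!$ with a negative base. As cross-checks I would confirm the endpoints $a = 0$ (volume $0$) and $a = k$ (volume $1 = \frac{1}{k!}\sum_{i=0}^k (-1)^i \binom{k}{i}(k-i)^k$, the standard surjection-counting identity). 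One can alternatively argue by induction on $k$ via $\mathrm{vol}(R_k(a)) = \int_{\max(0,a-1)}^{\min(k-1,a)} \mathrm{vol}(R_{k-1}(s))\,ds$ plus a boundary term, but the inclusion--exclusion route avoids splitting that integral at the integer points and is cleaner to write.
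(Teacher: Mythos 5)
Your proof is correct, but it is not the route the paper takes: the paper's ``proof'' of this lemma is a one-line citation to a probabilistic argument in Feller (Theorem 1, Section I.9 of the cited reference), whereas you give a complete, self-contained derivation. Your argument --- writing the region as the corner simplex $T(a)$ minus the union of the overshoot pieces $B_j = T(a) \cap \{x_j > 1\}$, translating each $i$-fold intersection onto $T(a-i)$, and applying inclusion--exclusion --- is exactly the alternative the paper's own Remark 2.1 alludes to but deliberately omits. The details check out: the change of variables $y_j = x_j - 1$ on $j \in I$ is volume-preserving, the terms with $i > \lfloor a \rfloor$ correctly vanish (empty intersection), the borderline case $i = a \in \mathbb{Z}$ is measure zero and consistent with $(a-i)^k = 0$, and the endpoint checks at $a=0$ and $a=k$ (the surjection identity) are sound. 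What your version buys is self-containedness and, as the paper's remark itself points out, an argument that adapts directly to counting lattice points in the region rather than just measuring its volume; what the citation buys is brevity. Either is acceptable here, and nothing in your write-up is a gap.
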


\begin{proof}
This is a special case of Theorem 1 in Section I.9 of \cite{MR0210154}. 
\end{proof}

\begin{remark}
The proof in \cite{MR0210154} uses probability theory, but the formula can also be obtained directly using an inclusion-exclusion argument. The latter proof is useful in that it can easily be adapted to count the number of lattice points in the region; however, we will not use this formula, so we omit the alternate proof.
\end{remark}

%
%


We will also need the following integral formula, easily proven by induction.

\begin{lemma}
$$ 
\int_c^1 dx_1
\int_{c/x_1}^1 dx_2
\int_{c/x_1 x_2}^1 dx_3
\cdots
\int_{c/x_1 \cdots x_{k-1}}^1 dx_k
= 1 - c  \sum_{i=0}^{k-1} \frac{1}{i!} \left(\ln \frac{1}{c}\right)^i
$$
\end{lemma}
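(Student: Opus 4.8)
The plan is to prove the identity by induction on $k$, keeping $c \in (0,1]$ as a free parameter throughout so that the inductive hypothesis can be specialized as needed. For the base case $k=1$, the left side is $\int_c^1 dx_1 = 1-c$, which agrees with the right side since the sum there consists only of the $i=0$ term, namely $c \cdot \frac{1}{0!}(\ln\frac{1}{c})^0 = c$.

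For the inductive step, suppose the formula holds in dimension $k-1$ for every value of the lower-bound parameter in $(0,1]$. Fixing $x_1 \in [c,1]$, observe that the inner $(k-1)$-fold integral
$$\int_{c/x_1}^1 dx_2 \int_{c/(x_1 x_2)}^1 dx_3 \cdots \int_{c/(x_1\cdots x_{k-1})}^1 dx_k$$
is, after renaming $x_2,\ldots,x_k$, exactly the $(k-1)$-dimensional instance of our integral with parameter $c$ replaced by $c/x_1$ (and $c/x_1 \in (0,1]$ since $x_1 \ge c$). By the inductive hypothesis it equals $1 - \frac{c}{x_1}\sum_{i=0}^{k-2}\frac{1}{i!}\bigl(\ln\frac{x_1}{c}\bigr)^i$, so it remains to evaluate
$$\int_c^1 \Bigl(1 - \frac{c}{x_1}\sum_{i=0}^{k-2}\frac{1}{i!}\Bigl(\ln\frac{x_1}{c}\Bigr)^i\Bigr)\,dx_1.$$

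The constant term contributes $1-c$. For the remaining terms, the substitution $u = \ln(x_1/c)$, $du = dx_1/x_1$, sends the limits $x_1=c$ and $x_1=1$ to $u=0$ and $u=\ln(1/c)$, giving $\int_c^1 \frac{1}{x_1}(\ln\frac{x_1}{c})^i\,dx_1 = \int_0^{\ln(1/c)} u^i\,du = \frac{1}{i+1}\bigl(\ln\frac{1}{c}\bigr)^{i+1}$. Collecting everything yields $1 - c - c\sum_{i=0}^{k-2}\frac{1}{(i+1)!}\bigl(\ln\frac{1}{c}\bigr)^{i+1}$; reindexing by $j = i+1$ and absorbing the stray $-c$ as the $j=0$ term produces exactly $1 - c\sum_{j=0}^{k-1}\frac{1}{j!}\bigl(\ln\frac{1}{c}\bigr)^j$, which closes the induction.

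Every step is an elementary one-variable integration, so there is no substantive obstacle; the only point that genuinely needs care is setting up the induction with $c$ as a parameter, since the inner integral arising in the inductive step has $c/x_1$ rather than $c$ in its limits and the argument collapses if $c$ is frozen. A secondary bookkeeping check — that $c/x_1$ remains in the range $(0,1]$ for which the $(k-1)$-dimensional formula was established — is immediate from $x_1 \ge c$.
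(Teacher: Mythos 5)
Your proof is correct and follows essentially the same route as the paper: induction on $k$ with $c$ kept as a free parameter, recognizing the inner integral as the lower-dimensional instance with parameter $c/x_1$, and evaluating via the substitution $u=\ln(x_1/c)$ followed by reindexing the sum. No substantive differences to report.
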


\begin{remark}
In a sense, the domain of integration is a multiplicative analogue of the $k$-simplex. Also note that the right side of the equation approaches 0 as $k \to \infty$, since the sum is a truncated Maclaurin series for $e^x$ evaluated at $x = - \ln c$ .
\end{remark}

\begin{proof}
Let $J(k,c)$ represent the left side of the equation. The theorem is 
clearly true when $k=1$, so we proceed by induction on $k$. Assume the statement is true
for $k$; then

\begin{eqnarray*}
J(k+1,c) & = & \int_c^1 J\left(k,\frac{c}{x_1}\right) dx_1 \\
& = & \int_c^1 \left(1 - \frac{c}{x_1} \sum_{i=0}^{k-1} \frac{1}{i!} (\ln x_1 - \ln c)^i \right) dx_1 \\
& = & (1-c) - c \sum_{i=0}^{k-1} \frac{1}{i!} \int_0^{-\ln c} (\ln x_1 - \ln c)^i d(\ln x_1 - \ln c) \\
& = & (1-c) - c \sum_{i=0}^{k-1} \frac{1}{(i+1)!} (- \ln c)^{i+1} \\
& = & 1 - c  \sum_{i=0}^{k-1} \frac{1}{i!} \left(\ln \frac{1}{c}\right)^i
\end{eqnarray*}

and thus the lemma holds for all $k$.
\end{proof}

Finally, we quote a theorem from Lang, adapted for our purposes, which will allow us to use integrals to approximate subsets of lattices.

\begin{theorem}[Lang]
Let $D$ be a subset of $[0,1]^k$ such that the boundary of $D$ has a Lipschitz-continuous parametrization in $(k-1)$ variables, and let
$nD = \{nx : x \in D\}$. Then
$$\#\left(\{1,\ldots,n\}^k \cap nD\right) = n^k \mathrm{Vol}(D) + O(n^{k-1}).$$
\end{theorem}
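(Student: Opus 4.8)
The plan is to realize the lattice count as a Riemann-type approximation to the volume, with the discrepancy controlled by the number of unit cubes that straddle the boundary $\partial(nD)$, and then to use the Lipschitz parametrization to show this number of ``boundary cubes'' is $O(n^{k-1})$.

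First I would tile $\mathbb{R}^k$ by the half-open unit cubes $Q_m = m + [0,1)^k$, $m \in \mathbb{Z}^k$, so that each integer point is a distinguished corner of exactly one cube and $\mathrm{Vol}(nD) = \sum_m \mathrm{Vol}(Q_m \cap nD)$. Classify $Q_m$ as \emph{interior} ($Q_m \subseteq nD$), \emph{exterior} ($Q_m \cap nD = \emptyset$), or \emph{boundary} (otherwise); since $Q_m$ is connected, a boundary cube meets $\partial(nD)$. Interior cubes contribute $1$ to both the lattice count and the volume, exterior cubes contribute $0$ to both, so $|\#\{m \in \mathbb{Z}^k : m \in nD\} - \mathrm{Vol}(nD)|$ is at most twice the number $B_n$ of boundary cubes. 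Because $D \subseteq [0,1]^k$ gives $nD \subseteq [0,n]^k$, every integer point of $nD$ already lies in $\{0,\ldots,n\}^k$, and discarding the $O(n^{k-1})$ such points having a zero coordinate turns $\{0,\ldots,n\}^k$ into $\{1,\ldots,n\}^k$ without affecting the error term; combined with $\mathrm{Vol}(nD) = n^k\mathrm{Vol}(D)$, the theorem follows once $B_n = O(n^{k-1})$.

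To bound $B_n$, let $\phi$ be an $L$-Lipschitz surjection onto $\partial D$ from a domain in $\mathbb{R}^{k-1}$, which after translating and rescaling I may assume is contained in $[0,1]^{k-1}$. Since scaling by $n$ is a homeomorphism, $\partial(nD) = n\partial D$ is the image of the $(nL)$-Lipschitz map $n\phi$. Cut $[0,1]^{k-1}$ into the $n^{k-1}$ subcubes of side $1/n$; the $n\phi$-image of each has diameter at most $(nL)(\sqrt{k-1}/n) = L\sqrt{k-1}$, a constant independent of $n$, and a set of diameter $d$ fits inside a box with all sides at most $d$, hence meets at most $(\lceil d\rceil + 1)^k$ of the $Q_m$. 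Thus each subcube's image meets at most a constant $C = C(k,L)$ of the unit cubes, and since these images cover $\partial(nD)$ every boundary cube is counted, giving $B_n \le C\,n^{k-1} = O(n^{k-1})$.

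The argument is essentially bookkeeping; the one point needing care is the passage from the Lipschitz bound to ``meets $O(1)$ unit cubes,'' which is where the hypothesis on $\partial D$ is genuinely used, together with the mild point that the parametrization must be taken surjective onto the boundary (which I read as part of its meaning). I would also remark that nothing changes if $[0,1]^k$ is replaced by any fixed box $[0,M]^k$, which is the form the later sections actually need.
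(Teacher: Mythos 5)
Your proposal is correct, but it takes a more self-contained route than the paper, whose proof is essentially a citation: the paper applies Theorem 2, p.~128 of Lang's \emph{Algebraic Number Theory} with the lattice $L=\mathbb{Z}^k$ and fundamental domain $F=(0,1]^k$, and then only adds the observation that the fewer than $k(n+1)^{k-1}$ lattice points of $nD$ with some coordinate equal to $0$ can be absorbed into the error term, so that $\{0,\ldots,n\}^k$ may be replaced by $\{1,\ldots,n\}^k$. You instead reprove the cited counting principle from scratch: tiling by half-open unit cubes, observing that any cube neither contained in $nD$ nor disjoint from it must (by connectedness) meet $\partial(nD)=n\,\partial D$, and bounding the number of such boundary cubes by $O(n^{k-1})$ via cutting the Lipschitz parameter domain into $n^{k-1}$ subcubes of side $1/n$, each of whose images under the $nL$-Lipschitz map has diameter bounded independently of $n$ and hence meets only $O(1)$ unit cubes. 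This is in substance the standard proof of Lang's counting lemma (fundamental-domain translates meeting the boundary), so the mathematical content coincides; what your version buys is a self-contained argument that makes explicit where the Lipschitz hypothesis is used, while the paper buys brevity by deferring to a standard reference, and your handling of the passage from $\{0,\ldots,n\}^k$ to $\{1,\ldots,n\}^k$ is identical to the paper's. Minor points only: each boundary cube contributes a discrepancy of at most $1$, so $B_n$ rather than $2B_n$ already suffices (your factor of two is harmless); one should note that the Lipschitz image $\partial D$ has measure zero, so $\mathrm{Vol}(D)$ is well defined; and if the hypothesis is read, as in Lang, as allowing finitely many Lipschitz charts covering the boundary, your argument goes through unchanged up to a constant factor.
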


\begin{proof}
Apply Theorem 2, p. 128 in \cite{MR1282723} with $L = \mathbb{Z}^k$ and $F=(0,1]^k$. There are fewer than $k(n+1)^{k-1}$ lattice points in the intersection of $nD$ and $(\{0,\ldots,n\}^k \backslash \{1,\ldots,n\}^k)$, and these can be absorbed into the error term.
\end{proof}

\section{Bounding $c_k$ from below}

One method of generating sumfree sets in $\{1,\ldots,n\}$ is to consider ``cross-section'' sets
$$K_a := \{(x_1,\ldots,x_k) \in \{1,\ldots,n\}^k:x_1 + \cdots + x_k = a\}.$$

If $A$ is a sumfree set in $\{k,\ldots,kn\}$, the set $S = \cup_{a \in A} K_a$ is sumfree, because
if $(x_1,\ldots,x_k)$ and $(y_1,\ldots,y_k)$ are both contained in $S$, then
$$(x_1+y_1) + \cdots (x_k + y_k) = (x_1 + \cdots + x_k) + (y_1 + \cdots + y_k) \notin A,$$
so the sum of these two elements is not in $S$.

We will determine a lower bound for $c_k$ using sets of the form
$$S(n,k,a) := \{(x_1,\ldots,x_k) \in \{1,\ldots,n\}^k:a \leq x_1 + \cdots + x_k < 2a\}.$$
Since $\{a+1,\ldots,2a\}$ is clearly sumfree in $\{k,\ldots,kn\}$, $S(n,k,a)$ is sumfree.
To obtain an optimal lower bound for this method, we need to choose a value of
$a$ that maximizes the size of $S(n,k,a)$. We approximate this size
using the region
$$\widetilde{S}(n,k,a) := \{(x_1,\ldots,x_k) \in [0,n]^k \subset \mathbb{R}^k: a \leq x_1 + \cdots + x_k < 2a\}.$$

Note that since $\widetilde{S}(1,k,a)$ is just a scaled-down copy of $\widetilde{S}(n,k,an)$, we have
$$\widetilde{S}(n,k,an) = n^k \widetilde{S}(1,k,a).$$

\begin{proof}[Proof of Theorem 1.1]
By Lemma 2.1, the volume of 
$$\{(x_1,\ldots,x_k) \in [0,1]^k \subset \mathbb{R}^k: x_1 + \cdots + x_k < a\}$$
is equal to 
$$V_1(k,a) := \frac{1}{k!}\sum^{\lfloor a \rfloor}_{i=0} (-1)^i \binom{k}{i} (a-i)^k.$$
Changing variables, the volume of
$$\{(x_1,\ldots,x_k) \in [0,1]^k \subset \mathbb{R}^k :x_1 + \cdots + x_k > 2a\}$$ 
is equal to 
$$V_2(k,a) := \frac{1}{k!}\sum^{\lfloor k-2a \rfloor}_{i=0} (-1)^i \binom{k}{i} (k-2a-i)^k.$$

We wish to choose a value of $a$ (for each $k$) which maximizes 
$$\mathrm{Vol}(\widetilde{S}(1,k,a)) = 1 - V_1(k,a) - V_2(k,a).$$

A computer search (for $k < 60$) suggests the optimal choice satisfies
$a = k/3 + O(1)$, although it is difficult to determine an exact formula. For our lower 
bound, we choose $a=k/3$; this value appears to be close to optimal, and it gives a 
concise expression for $\mathrm{Vol}(\widetilde{S}(1,k,a))$ (since $V_1(k,k/3) = V_2(k,k/3)$). 
The regions $\widetilde{S}(1,k,k/3)$ for $k=2,3$ are shown in Figures 1 and 2.

\begin{figure}[h]
\begin{minipage}[b]{0.5\linewidth} 
\centering
\epsfig{file=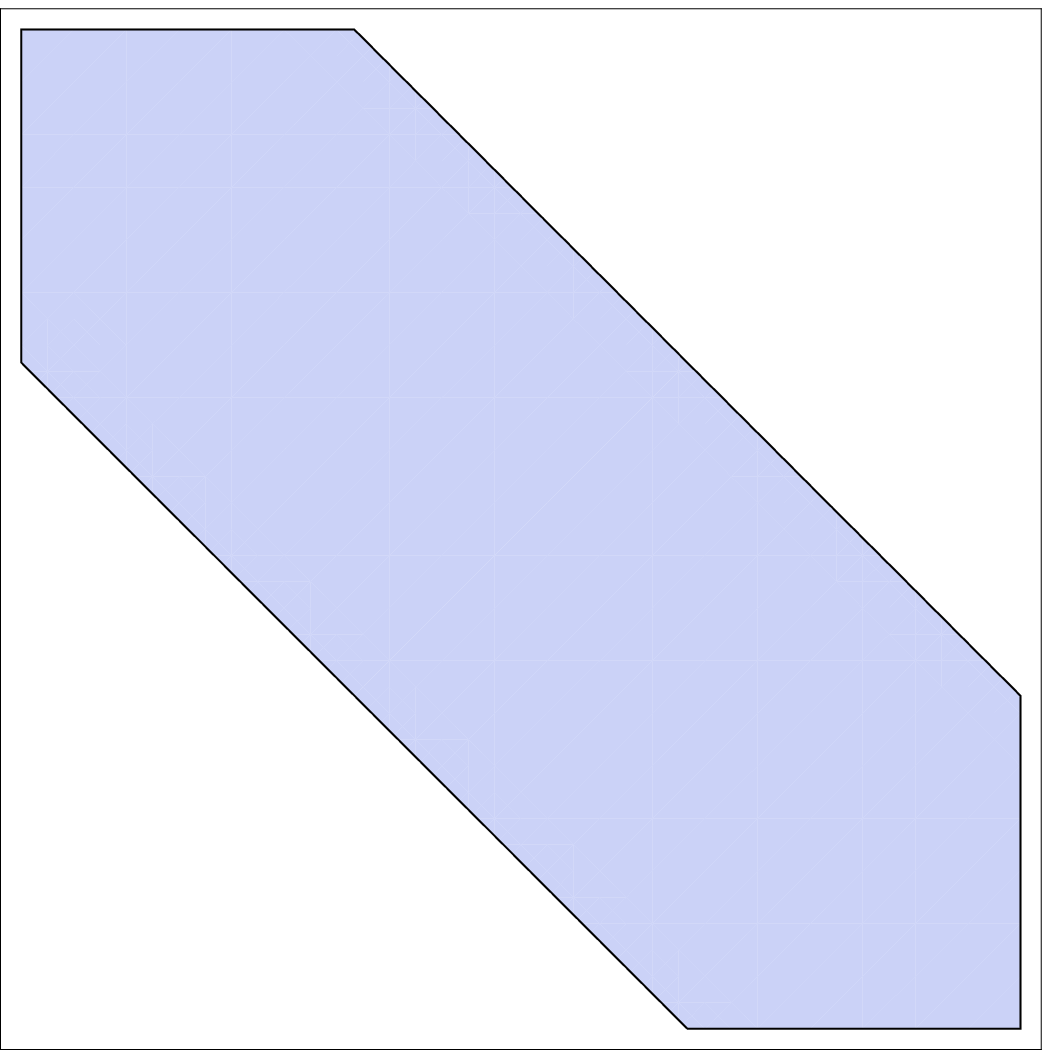, scale=0.5}
\caption{Sumfree region for k = 2}
\end{minipage}
\hspace{0.5cm} 
\begin{minipage}[b]{0.5\linewidth}
\centering
\epsfig{file=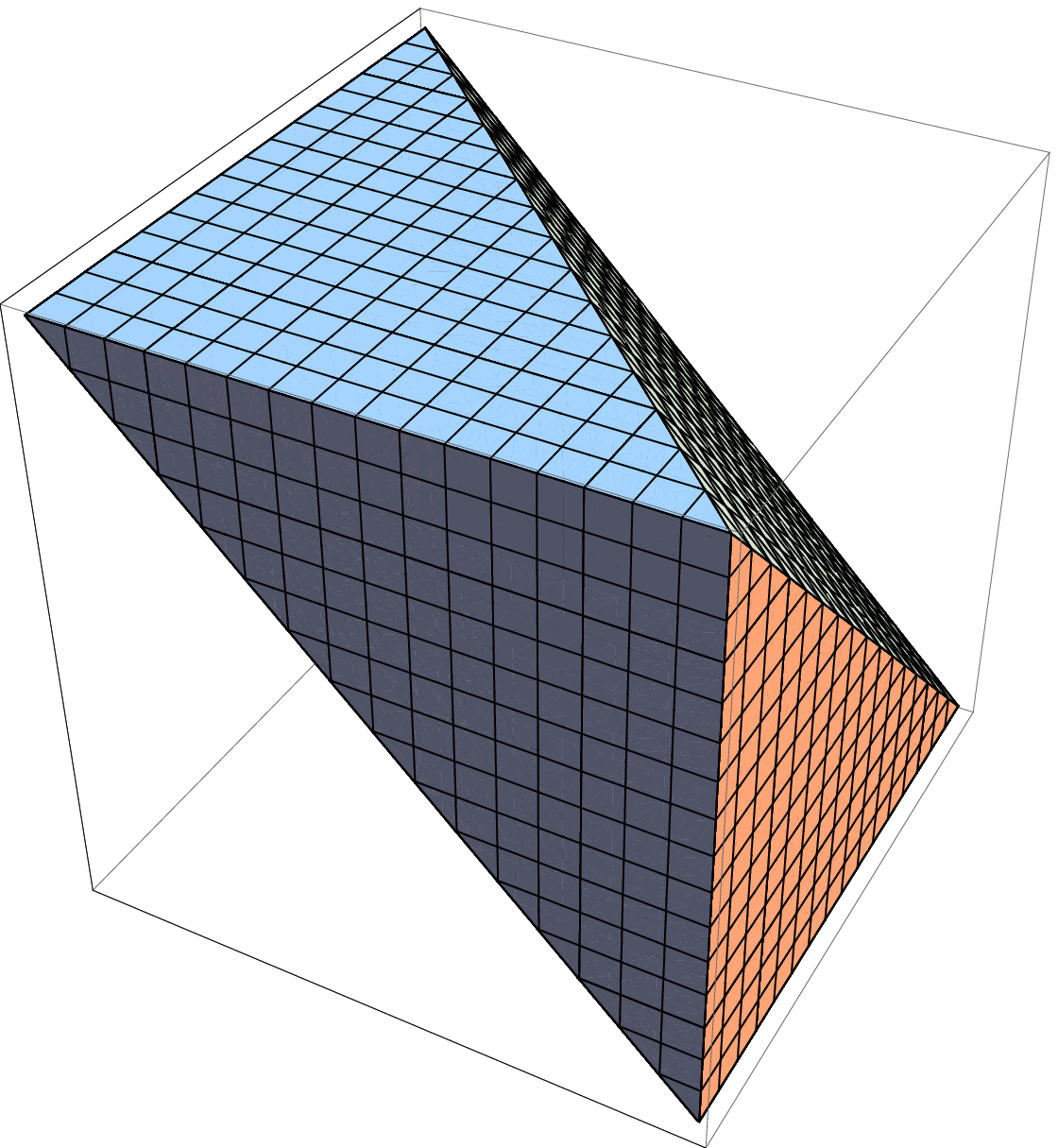, scale=0.5}
\caption{Sumfree region for k = 3}
\end{minipage}
\end{figure}

By Theorem 2.3, $\#S(n,k,kn/3) = \mathrm{Vol}(\widetilde{S}(n,k,kn/3)) + O(n^{k-1})$, since
all of the boundaries of the region are hyperplanes and are thus Lipschitz parametrizable. Then we have
\begin{eqnarray*}
c_k & \geq &  \limsup_{n \to \infty} \frac{1}{n^k} \#(S(n,k,kn/3)) \\
& = & \limsup_{n \to \infty} \frac{1}{n^k} \left(\mathrm{Vol}(\widetilde{S}(n,k,kn/3)) + O(n^{k-1})\right) \\
& = & \limsup_{n \to \infty} \frac{1}{n^k} \left(n^k \mathrm{Vol}(\widetilde{S}(1,k,k/3)) + O(n^{k-1})\right) \\
& = & \mathrm{Vol}(\widetilde{S}(1,k,k/3)) \\
& = & 1 - \frac{2}{k!}\sum^{\lfloor k/3 \rfloor}_{i=0} (-1)^i \binom{k}{i} \left(\frac{k}{3}-i\right)^k.
\end{eqnarray*}\end{proof}

To determine the behavior of this lower bound, we need the following lemma.

\begin{lemma}
Let $a,b$ satisfy $0 < a < b<\frac{1}{2}$ and 
$$\frac{1}{3} - a < \frac{b^b (1-b)^{(1-b)}}{e}.$$
Then
$$\lim_{k \to \infty} \frac{1}{k!} \sum^{\lfloor bk \rfloor}_{i=\lceil ak \rceil} \binom{k}{i} \left(\frac{k}{3}-i\right)^k = 0.$$
\end{lemma}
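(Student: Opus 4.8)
The plan is to bound the sum by its largest term (up to the polynomial-in-$k$ factor coming from the number of terms) and show that this largest term, when multiplied by the factor $1/k!$, tends to zero. Writing $i = \lfloor tk \rfloor$ for $t \in [a,b]$, the general term is
$$\frac{1}{k!}\binom{k}{i}\left(\frac{k}{3}-i\right)^k \approx \frac{1}{k!}\binom{k}{tk}\left(\left(\tfrac13 - t\right)k\right)^k.$$
First I would apply Stirling's formula in the form $m! = (1+o(1))\sqrt{2\pi m}\,(m/e)^m$ to both $k!$ and the factorials inside $\binom{k}{i} = k!/(i!(k-i)!)$, so that
$$\frac{1}{k!}\binom{k}{tk} \approx \frac{1}{i!\,(k-i)!} \approx \left(\frac{e}{t^t(1-t)^{1-t}}\right)^k \cdot (\text{poly in }k)^{-1},$$
using $i \approx tk$ and $k-i \approx (1-t)k$. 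Combining with the factor $\left((\tfrac13 - t)k\right)^k$ gives that, up to subexponential factors, the term of index $tk$ behaves like
$$\left(\frac{e\,(\frac13 - t)\,k}{t^t(1-t)^{1-t}}\right)^k = \left(\frac{(\frac13 - t)}{\left(b^b(1-b)^{1-b}/e\right)} \cdot \frac{b^b(1-b)^{1-b}}{t^t(1-t)^{1-t}}\right)^k k^k.$$

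Next I would dispose of the troublesome $k^k$: it is cancelled because each term is really being compared against $k! \approx (k/e)^k$, so the relevant quantity is $\frac{1}{k!}\binom{k}{i}\left(\frac k3 - i\right)^k \approx \left(\frac{e(\frac13-t)}{t^t(1-t)^{1-t}}\right)^k$ — the $k^k$ from $(\frac k3 - i)^k$ and the $k^{-k}$ hidden in $1/k!$ annihilate each other, leaving a clean exponential base depending only on $t$. So it suffices to show that the base
$$g(t) := \frac{e\left(\frac13 - t\right)}{t^t(1-t)^{1-t}}$$
satisfies $g(t) < 1$ for all $t \in [a,b]$. The function $t \mapsto \frac13 - t$ is decreasing, while $t \mapsto t^t(1-t)^{1-t}$ is decreasing on $(0,\tfrac12)$, so $g$ is a ratio of a decreasing positive function by a decreasing positive function — I would check monotonicity directly, but the key point is that the hypothesis $\frac13 - a < b^b(1-b)^{1-b}/e$ is exactly the statement $g(a) < e(\frac13 - a)/(b^b(1-b)^{1-b})\cdot(\text{something} \le 1)$; more carefully, for $t \in [a,b]$ we have $\frac13 - t \le \frac13 - a$ and $t^t(1-t)^{1-t} \ge b^b(1-b)^{1-b}$ (minimum of $t^t(1-t)^{1-t}$ over $[a,b] \subset (0,\tfrac12)$ is attained at $t=b$), hence $g(t) \le e(\frac13 - a)/(b^b(1-b)^{1-b}) < 1$ by hypothesis. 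Then each of the at most $k+1$ terms in the sum is $(1+o(1))\,g(t)^k$ times a subexponential factor, uniformly in $t$, and summing $O(k)$ such terms still gives something that decays geometrically to $0$.

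The main obstacle I anticipate is making the Stirling estimates genuinely uniform in $i$ over the range $\lceil ak\rceil \le i \le \lfloor bk \rfloor$, including controlling the polynomial (in $k$) error factors so that one bound works for all terms simultaneously; since $a > 0$ and $b < \tfrac12$ are fixed, $i$ and $k-i$ both grow linearly in $k$ and stay bounded away from $0$ and $k$, so Stirling applies with uniform error, but this needs to be stated carefully. A secondary technical point is the rounding: replacing $\lfloor tk \rfloor$ by $tk$ perturbs $(\frac k3 - i)^k$ by a factor $(1 + O(1/k))^k = O(1)$ and perturbs the binomial coefficient similarly, so the rounding is harmless, but one should note it. Once uniformity is in hand, the estimate $\frac{1}{k!}\sum_{i}\binom{k}{i}(\frac k3 - i)^k \le (k+1)\cdot C\cdot \left(\max_{t\in[a,b]} g(t)\right)^k \cdot \mathrm{poly}(k) \to 0$ follows immediately from $\max_{t \in [a,b]} g(t) < 1$.
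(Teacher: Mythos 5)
Your proposal is correct and is essentially the paper's own argument: the paper bounds every term in the range by $\binom{k}{\lfloor bk\rfloor}\left(\frac{k}{3}-ak\right)^k$ (the same decoupling you use, maximizing the binomial factor at $t=b$ and the $\frac13-t$ factor at $t=a$), multiplies by the $O(k)$ number of terms, and applies Stirling once to reach the exponential base $\left(\frac{1}{3}-a\right)e\big/\left(b^b(1-b)^{1-b}\right)<1$, which is exactly your bound on $g(t)$. The only difference is the order of operations: the paper takes the uniform term bound first and then applies Stirling a single time, which sidesteps the uniform-in-$i$ Stirling estimates you correctly flag as the technical point in your termwise version.
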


\begin{proof}
\begin{eqnarray*}
\lim_{k \to \infty} \frac{1}{k!} \sum^{\lfloor bk \rfloor}_{i=\lceil ak \rceil} \binom{k}{i} \left(\frac{k}{3}-i\right)^k
& \leq & \lim_{k \to \infty} \frac{1}{k!} \sum^{\lfloor bk \rfloor}_{i=\lceil ak \rceil} \binom{k}{bk} \left(\frac{k}{3}-ak\right)^k \\
& \leq & \lim_{k \to \infty} \frac{(b-a+1)k}{(bk)!(k-bk)!}k^k \left(\frac{1}{3}-a \right)^k.
\end{eqnarray*}

Then, using Stirling's approximation,
\begin{eqnarray*}
\lim_{k \to \infty} \frac{(b-a+1)k}{(bk)!(k-bk)!}k^k \left(\frac{1}{3}-a \right)^k
& = & \lim_{k \to \infty} \frac{b-a+1}{2\pi \sqrt{b(1-b)}} \left(
\frac{\left(\frac{1}{3}-a\right)e}{b^b (1-b)^{(1-b)}} \right)^k \\
& = & 0. \end{eqnarray*}
\end{proof}

\begin{proof}[Proof of Corollary 1.2]
Define a sequence $\{a_i\}$ as follows:
$$a_0 = \frac{1}{3},$$
$$a_{i+1} = \frac{1}{3}-\frac{a_i^{a_i} (1-a_i)^{(1-a_i)}}{e}.$$

Calculating the initial terms of this sequence, we find that $a_7 < 0$, and that $a_1,
a_2,\ldots,a_6$ are irrational. Thus we can split the sum as follows:
$$\sum^{\lfloor k/3 \rfloor}_{i=0} (-1)^i \binom{k}{i} \left(\frac{k}{3}-i\right)^k = \sum^{\lfloor a_0 k \rfloor}_{i=\lceil a_1 k \rceil}
+ \sum^{\lfloor a_1 k \rfloor}_{i=\lceil a_2 k \rceil}
+ \cdots + \sum^{\lfloor a_6 k \rfloor}_{i= 0}.$$

By Lemma 3.1, each of these partial sums approaches zero as $k$ approaches infinity,
and so the entire sum does as well. This means that the lower bound determined in Theorem 1.1
approaches 1 as $k$ grows, and therefore so does $c_k$.
\end{proof}

\section{Bounding $c_k$ from above}

The process of finding an upper bound for $c_k$ is a bit more complicated, since we
cannot do so simply by exhibiting a sumfree set. Here our procedure is to assume 
our sumfree set has a certain size, and from this we determine a contradiction if the
set is too large. 

\begin{proof}[Proof of Theorem 1.3]
Let $S$ be a sumfree subset of $\{1,\ldots,n\}^k$ with $\#S \geq \alpha n^k$.  
Suppose $b = (b_1, \ldots, b_k)$ is an element of $S$ with component values
close to 1. 
There are $\frac{1}{2} \prod_{i=1}^k (b_i-1)$ disjoint pairs of elements in $\{1,\ldots,n\}^k$ 
which sum to $b$, unless all of the $b_i$'s are even, in which case there are
$\frac{1}{2} (\prod_{i=1}^k (b_i-1) + 1)$, to account for the point
$(\frac{b_1}{2}, \ldots, \frac{b_k}{2})$... Either way, the number of pairs is equal to 
$\frac{1}{2} b_1 \cdots b_n + O(n^{k-1})$. 

At least one element from each of these pairs must be absent from
$S$, so (approximately)
$$\alpha n^k \leq \#S \leq n^k - \frac{1}{2}b_1b_2 \cdots b_k + O(n^{k-1})$$
and thus
$$ b_1b_2 \cdots b_k \leq (2 - 2\alpha) n^k + O(n^{k-1}) = \beta n^k,$$
where $\beta = (2-2\alpha) + O(1/n)$.

This ``disqualifies'' a number of lattice points from being contained in $S$, namely
$$T(n,k, \alpha) = \{(b_1,\ldots, b_k) \in \{1,\ldots,n\}^k : b_1b_2 \cdots b_k > \beta n^k\}.$$

As in the last section, we will approximate this collection of lattice points by the region

$$\widetilde{T}(n,k, \alpha) = \{(b_1,\ldots, b_k) \in [0,n]^k \subset \mathbb{R}^k : b_1b_2 \cdots b_k > \beta n^k\}.$$

We can calculate the volume of $\widetilde{T}(n,k, \alpha)$ using an integral:

\begin{eqnarray*}
\mathrm{Vol}(\widetilde{T}(n,k, \alpha)) & = &
\int_{\beta n}^n dx_1
\int_{\beta n/x_1}^n dx_2
\int_{\beta n/x_1 x_2}^n dx_3
\cdots
\int_{\beta n/x_1 \cdots x_{k-1}}^n dx_k \\
& = & n^k 
\int_{\beta }^ dx_1
\int_{\beta /x_1}^ dx_2
\int_{\beta /x_1 x_2}^ dx_3
\cdots
\int_{\beta /x_1 \cdots x_{k-1}}^ dx_k \\
& = & n^k 
\left(1 - \beta  \sum_{i=0}^{k-1} \frac{1}{i!}\left(\ln \frac{1}{\beta}\right)^i \right),
\end{eqnarray*}
using Lemma 2.2 with $c = \beta$ in the final step. Since, by Theorem 2.3, $\mathrm{Vol}(T(n,k, \alpha)) = \mathrm{Vol}(\widetilde{T}(n,k, \alpha)) + O(n^{k-1})$, this indicates that for any $\alpha$
such that

$$
\alpha = 1 - \frac{1}{2} \beta + O(1/n) \geq f(\beta) := \beta \sum_{i=0}^{k-1} \frac{1}{i!} \left(\ln \frac{1}{\beta}\right)^i,
$$
any set larger than $\alpha n^k$ is simultaneously smaller than $\alpha n^k$, yielding a
contradiction. 

Observe that 
\begin{eqnarray*}
f'(\beta) & = & \beta \left( \sum_{i=0}^{k-2} \frac{1}{i!}\left(\ln \frac{1}{\beta}\right)^i \right) \left(-\frac{1}{\beta}\right)
+ \sum_{i=0}^{k-1} \frac{1}{i!}\left(\ln \frac{1}{\beta}\right)^i \\
& = & \frac{1}{(k-1)!}\left(\ln \frac{1}{\beta}\right)^{k-1} > 0.
\end{eqnarray*}

Thus, as $\beta$ increases from $0$ to $1$, $f(\beta)$ increases monotonically from $0$ to $1$,
while $(1 - \beta/2)$ decreases monotonically from $1$ to $1/2$. 
Therefore, the equation $1 - \beta/2 = f(\beta)$ has a unique root $\beta^* \in [0,1]$, and letting
$\alpha^* = (1-\beta^*/2)$, we must 
have $\alpha < \alpha^*+O(1/n)$ to avoid a contradiction.
Letting $n$ approach infinity, we conclude that $c_k \leq \alpha^*$.
\end{proof}

\section{A continuous analogue}

In the previous two sections, we used the volume of continuous regions to estimate
the size of discrete sets. Alternatively, we could have asked our question about the
continuous regions in the first place. Let us consider
$$\tilde{c}_k :=  \max \{\mbox{Vol}(S):S \in [0,1]^k \mbox{ is measurable and sumfree}\}.$$

\begin{theorem}
Defining $\tilde{c}_k$ as above,
$$\tilde{c}_k \geq 1 - \frac{2}{k!}\sum^{\lfloor k/3 \rfloor}_{i=0} (-1)^i \binom{k}{i} \left(\frac{k}{3}-i\right)^k.$$
\end{theorem}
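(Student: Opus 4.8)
The plan is to reduce the continuous lower bound directly to the discrete one already established in Theorem 1.1. The key observation is that the discrete sumfree set $S(n,k,kn/3)$ constructed in Section 3 has a continuous counterpart: the region $\widetilde{S}(1,k,k/3) = \{(x_1,\ldots,x_k) \in [0,1]^k : k/3 \le x_1 + \cdots + x_k < 2k/3\}$ is itself sumfree as a subset of $[0,1]^k$. Indeed, if $x$ and $y$ both lie in this region, then $x_1 + \cdots + x_k \ge k/3$ and $y_1 + \cdots + y_k \ge k/3$, so $(x_1+y_1) + \cdots + (x_k+y_k) \ge 2k/3$, which means $x+y$ fails the strict upper inequality and hence $x+y \notin \widetilde{S}(1,k,k/3)$. (One should note that, as in the discrete case, the point of using the interval $[k/3, 2k/3)$ rather than a symmetric closed interval is precisely to keep the sum-condition a genuine sumfree condition; the half-open convention makes this clean.)

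Next I would verify that this region is Lebesgue measurable — it is, being the intersection of $[0,1]^k$ with two half-spaces — and compute its volume. This was already done in the proof of Theorem 1.1: by Lemma 2.1, $\mathrm{Vol}(\widetilde{S}(1,k,k/3)) = 1 - V_1(k,k/3) - V_2(k,k/3)$, and since $a = k/3$ is chosen so that $V_1(k,k/3) = V_2(k,k/3)$, this equals
$$1 - \frac{2}{k!}\sum^{\lfloor k/3 \rfloor}_{i=0} (-1)^i \binom{k}{i} \left(\frac{k}{3}-i\right)^k.$$
Therefore $\widetilde{S}(1,k,k/3)$ is a measurable sumfree subset of $[0,1]^k$ achieving exactly the claimed volume, so $\tilde{c}_k$ — which is the maximum volume over all such sets — is at least this quantity.

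Honestly, there is no serious obstacle here: the continuous lower bound is in fact the \emph{easier} statement, since Theorem 1.1 was proved by approximating this very region with lattice points, and here we simply use the region on its own terms without the approximation step (so Theorem 2.3 is not even needed). The one point requiring a moment's care is confirming that the sumfree property survives passage from the lattice to the continuum — i.e., that the strict-versus-nonstrict inequalities line up so that $x + y$ is genuinely excluded — but this is immediate from the additivity of the coordinate-sum functional as sketched above. A remark could be added that, unlike the discrete case where one worries about boundary lattice points and the $O(n^{k-1})$ error, the continuous statement is exact.
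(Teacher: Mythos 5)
Your proof is correct and follows essentially the same route as the paper, which establishes Theorem 5.1 by re-running the Theorem 1.1 construction with the continuous stripe region $\widetilde{S}(1,k,k/3)$ used directly, so that the volume from Lemma 2.1 is exact and no lattice-point approximation or error term is needed. Your explicit verification of the sumfree property of the stripe is the same observation underlying the discrete construction in Section 3.
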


\begin{corollary}
Defining $\tilde{c}_k$ as above,
$$\lim_{k \to \infty} \tilde{c}_k = 1.$$
\end{corollary}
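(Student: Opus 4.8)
The plan is to sandwich $\tilde{c}_k$ between two quantities that both tend to $1$. For the upper side, I would simply observe that any measurable $S \subset [0,1]^k$ satisfies $\mathrm{Vol}(S) \leq \mathrm{Vol}([0,1]^k) = 1$, so $\tilde{c}_k \leq 1$ for every $k$. For the lower side, Theorem 5.1 already supplies
$$\tilde{c}_k \geq 1 - \frac{2}{k!}\sum^{\lfloor k/3 \rfloor}_{i=0} (-1)^i \binom{k}{i} \left(\frac{k}{3}-i\right)^k,$$
and the right-hand side here is literally the same function of $k$ as the lower bound for $c_k$ in Theorem 1.1.

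The second step is to recall that this expression converges to $1$ as $k \to \infty$, which is exactly the computation performed in the proof of Corollary 1.2. Concretely, one writes the alternating sum $\sum_{i=0}^{\lfloor k/3\rfloor}(-1)^i\binom{k}{i}(k/3-i)^k$ as a telescoping chain of blocks with breakpoints at $\lceil a_{i+1}k\rceil$ and $\lfloor a_i k\rfloor$, where $a_0 = 1/3$ and $a_{i+1} = 1/3 - a_i^{a_i}(1-a_i)^{(1-a_i)}/e$; Lemma 3.1 shows each block is $o(k!)$, the fact that $a_7 < 0$ makes the chain finite, and the irrationality of the intermediate $a_i$ ensures the blocks exactly cover $0 \leq i \leq \lfloor k/3 \rfloor$ without overlap or gaps. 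Hence the whole alternating sum is $o(k!)$ and the bound in Theorem 5.1 tends to $1$.

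Combining the two bounds gives $1 - o(1) \leq \tilde{c}_k \leq 1$, so $\lim_{k\to\infty}\tilde{c}_k = 1$, as claimed. I do not expect any genuine obstacle: the analytic work is entirely inherited from Section 3, and the only point worth noting explicitly is that the lower bound quoted in Theorem 5.1 is identical as a function of $k$ to the one analyzed in Corollary 1.2, so no new estimate is required. (If one preferred not to invoke Corollary 1.2, the alternative would be to restate Lemma 3.1 and reproduce the splitting argument verbatim, but that seems unnecessary.)
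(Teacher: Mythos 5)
Your proposal is correct and follows essentially the same route as the paper: the paper proves Corollary 5.2 by noting the argument is identical to that of Corollary 1.2, i.e.\ the lower bound of Theorem 5.1 coincides with that of Theorem 1.1 and tends to $1$ by the splitting argument with Lemma 3.1, combined with the trivial bound $\tilde{c}_k \leq 1$. No new ideas are needed beyond what you wrote.
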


\begin{theorem}
Defining $\tilde{c}_k$ as above, let $\alpha^*$ be the unique root in $[1/2,1]$ of the equation
$$\alpha = (2-2\alpha)\left(1+ \sum^k_{i=0} \frac{1}{i!}\left(\ln \frac{1}{2-2\alpha}\right)^i\right).$$
Then $$\tilde{c}_k \leq \alpha^*.$$
\end{theorem}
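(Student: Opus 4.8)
The plan is to adapt the proof of Theorem 1.3 to the continuous setting, where two things change: the error terms disappear, and the ``disqualified'' region can be attacked iteratively rather than just once. First I would run the same opening argument: let $S \subseteq [0,1]^k$ be measurable and sumfree with $\mathrm{Vol}(S) \geq \alpha$, and for a point $b = (b_1,\ldots,b_k)$ I would count (a measure-theoretic version of) the pairs $x + y = b$ with $x,y \in [0,b_1]\times\cdots\times[0,b_k]$. Here the clean statement is that the map $x \mapsto b - x$ is a measure-preserving involution on the box $\prod_i [0,b_i]$, so $S$ can contain at most half the measure of that box, i.e. $\prod_i b_i$ minus $\mathrm{Vol}(S \cap \prod_i[0,b_i]) \geq \tfrac12 \prod_i b_i$; ignoring the second term gives that if $b \in S$ then $b_1\cdots b_k \leq 2 - 2\alpha =: \beta$. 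Hence $S$ avoids $\widetilde{T}(1,k,\alpha) = \{b \in [0,1]^k : b_1\cdots b_k > \beta\}$, whose volume by Lemma 2.2 (with $c = \beta$) is $1 - \beta\sum_{i=0}^{k-1}\frac{1}{i!}(\ln\frac1\beta)^i$, yielding the bound of Theorem 1.3 with no error term.

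The improvement comes from iterating. The key observation is that the argument above only used that $S$ has density at least $\alpha$ \emph{globally}; but once we know $S$ avoids the top region $\widetilde{T}$, the points of $S$ all lie in the complement $U = \{b : b_1\cdots b_k \leq \beta\}$, and within the box $\prod_i[0,b_i]$ we can now say more: the reflected copy $b - (S \cap \prod_i[0,b_i])$ must also avoid $S$, AND both $S$ and its reflection must avoid $\widetilde T$. So the measure of $\prod_i[0,b_i]$ available to $S$ is at most $\tfrac12(\prod_i b_i + \mathrm{Vol}(\widetilde T \cap \prod_i[0,b_i]))$, i.e.\ we recover a bit of room proportional to how much of the box is already forbidden. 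Pushing this through, the constraint on a point $b \in S$ tightens from $b_1\cdots b_k \leq \beta$ to $b_1 \cdots b_k \leq \beta'$ for some $\beta' < \beta$ determined by a fixed-point / integral relation, which is exactly what produces the extra ``$1 + $'' and the extra term (the sum now running to $i = k$ rather than $k-1$) in the displayed equation of Theorem 5.3 compared to Theorem 1.3. I would set this up as: define $\beta_0 = 2 - 2\alpha$ and let $\beta_{j+1}$ be obtained from $\beta_j$ by the sharpened box count, show the sequence is monotone and bounded, pass to the limit $\beta_\infty$, and check that $\alpha = 1 - \beta_\infty/2$ together with the limiting relation rearranges into the stated equation; then the monotonicity of the relevant function (same $f'(\beta) > 0$ computation as before, plus checking $1 - \beta/2$ still crosses it exactly once on $[0,1]$) gives the unique root $\alpha^* \in [1/2,1]$ and hence $\tilde c_k \leq \alpha^*$.

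The main obstacle I anticipate is the bookkeeping in the iteration step: making precise ``the reflected copy of $S$ within the box also avoids $\widetilde T$,'' since $\widetilde T$ is not translation-invariant and the reflection $x \mapsto b - x$ maps the sub-box $\prod_i[0,b_i]$ to itself but moves the product $x_1\cdots x_k$ around in a nonlinear way. One has to verify that the gain at each stage integrates to a clean expression — essentially re-deriving a Lemma-2.2-type identity but with the integrand now carrying the correction from the previous stage — and confirm that iterating this exactly $k$ times (or in the limit) collapses the truncated-exponential sum by one more term. A secondary, more routine concern is justifying the measure-theoretic version of ``at least one element of each pair is missing'' (Fubini on the involution $x \mapsto b - x$, and the fact that $\widetilde T$ has a Lipschitz boundary so all the regions in sight are measurable), but this is standard and should not cause real difficulty. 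I would also remark that, unlike in the discrete case, there is no need to worry about the parity exception for the midpoint $b/2$, since a single point has measure zero.
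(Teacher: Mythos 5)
Your first paragraph is, in substance, the paper's own proof of Theorem 5.3: replace the pair-counting step of Theorem 1.3 by the observation that $S\cap([0,b_1]\times\cdots\times[0,b_k])$ and its reflection under $x\mapsto b-x$ are disjoint sets of equal volume inside the box, so $\mathrm{Vol}(S\cap\prod_i[0,b_i])\leq\frac{1}{2}b_1\cdots b_k$; hence every $b\in S$ satisfies $b_1\cdots b_k\leq 2-2\alpha=\beta$, so $S$ avoids $\widetilde{T}$, and Lemma 2.2 together with the same monotonicity computation as in Theorem 1.3 gives the bound, now with no error terms. That argument alone proves the stated theorem, and it is exactly what the paper does.

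Everything after that rests on a misreading. The displayed equation in Theorem 5.3 is identical to the one in Theorem 1.3 (the additional $1$ and the sum running to $i=k$ are already present there), so no strengthening, and in particular no iteration, is needed for this statement. If you were instead comparing against the quantity $f(\beta)=\beta\sum_{i=0}^{k-1}\frac{1}{i!}\left(\ln\frac{1}{\beta}\right)^{i}$ appearing inside the proof of Theorem 1.3, note that the stated equation has a larger right-hand side and hence a larger root, so establishing the $k-1$ version, as your first paragraph does, is more than enough. The iteration you then sketch is the content of Theorem 5.4, not 5.3, and as written it contains a real gap: you assert, without derivation, that re-reflecting $S$ inside each box against $\widetilde{T}$ collapses the truncated exponential by one more term and yields precisely the stated equation, while yourself flagging the bookkeeping as unresolved. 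The paper's actual improvement proceeds differently: since $S$ misses $\widetilde{T}$, its density in the small cube $[0,(2-2\alpha)^{1/k}]^k$ is forced to be at least $\varphi_k(\alpha)$, and iterating $\varphi_k$ after rescaling leads to the different equation $\alpha=\frac{1}{2}-\alpha+\sum_{i\geq k}\frac{1}{i!}\left(\ln\frac{1}{2-2\alpha}\right)^{i}$ of Theorem 5.4, so the specific output you predict for your iteration is not supported. For the theorem actually asked about, stop after your first paragraph.
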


\begin{proof}
The proofs of these statements are virtually identical to the proofs of Theorem 1.1, Corollary 1.2,
and Theorem 1.3 respectively. The only difference is that $S(n,k,a)=\widetilde{S}(n,k,a)$ and 
$T(n,k,a)=\widetilde{T}(n,k,a)$, so there are no error terms to incorporate.

The proof of Theorem 5.3 warrants one additional comment. If $S$ is a Lebesgue-measurable sumfree set,
and $(b_1,\ldots, b_k) \in S$, then the sets
$$S_{b_1,\ldots,b_k} := S \cap ([0,b_1] \times \cdots \times [0,b_k])$$ 
$$S'_{b_1,\ldots,b_k} := \{(b_1,\ldots, b_k)-x:x \in S_{b_1,\ldots,b_k}\}$$
are disjoint sets of equal volume contained in $[0,b_1] \times \cdots \times [0,b_k]$.
Therefore we have
$$\mathrm{Vol}(S_{b_1,\ldots,b_k}) \leq \frac{1}{2}b_1 \cdots b_k.$$
This substitutes for the combinatorial argument that begins the proof of 
Theorem 1.3. 
\end{proof}

The upper bound for $\tilde{c}_k$ may be improved by a slightly different approach. 
Recall the definition of $f$ from the proof of Theorem 1.3, and suppose 
$\mbox{Vol}(S) = \alpha$, where $\alpha = f(\alpha)$. 
This would require $S$ to consist of all of $[0,1]^k$ except the ``integral wedge'' $\widetilde{T}(n,k,a)$ that we 
removed from the upper right corner. But this would mean that $S$
contains \emph{all} of a smaller set $[0,m]^k$. Scaling by a factor of $1/k$, this violates the upper bound 
we've just determined. We can improve our upper bound by exploiting this condition and iterating the 
process.

\begin{theorem}
Defining $\tilde{c}_k$ as above,
let $\alpha^{**}$ be the unique root in $(1/2,1)$ of the equation
$$\alpha = \frac{1}{2} - \alpha + \sum^{\infty}_{i=k} \frac{1}{i!}\left(\ln \frac{1}{2-2\alpha}\right)^i.$$
Then $$\tilde{c}_k \leq \alpha^{**}.$$
\end{theorem}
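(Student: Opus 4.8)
The plan is to sharpen the counting argument of Theorem 5.3 by using the extra structure forced on an extremal set $S$. As in that proof, for any $(b_1,\ldots,b_k)\in S$ the corner box $[0,b_1]\times\cdots\times[0,b_k]$ can contain at most half its volume from $S$, so $\mathrm{Vol}(\widetilde T(1,k,\alpha)) \le 1-\alpha$ forces $\alpha \le f(\beta)$ with $\beta = 2-2\alpha$, where $f$ is the function from the proof of Theorem 1.3. The new observation is that the disqualified ``wedge'' $\widetilde T(1,k,\alpha)$ occupies only the region where $b_1\cdots b_k > \beta$; in particular the small box $[0,\beta^{1/k}]^k$ is entirely disjoint from the wedge, so if $\mathrm{Vol}(S)$ is as large as the naive bound permits, $S$ must contain \emph{almost all} of $[0,\beta^{1/k}]^k$. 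But a sumfree set cannot contain all of a box: rescaling $[0,\beta^{1/k}]^k$ by $\beta^{-1/k}$ gives a sumfree subset of $[0,1]^k$ of full measure, contradicting, say, Theorem 5.3 (or even the trivial bound $\tilde c_k < 1$). The point of Theorem 5.5 is to quantify exactly how much of $S$ must be \emph{missing} near the small corner, and to feed that deficit back into the counting inequality.

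Concretely, I would argue as follows. Let $S$ be measurable and sumfree with $\mathrm{Vol}(S)=\alpha$. Set $\beta = 2-2\alpha$ and $m = \beta^{1/k}$. The set $S\cap[0,m]^k$, rescaled by $1/m$, is a sumfree subset of $[0,1]^k$, so its volume is at most $\tilde c_k \cdot m^k = \tilde c_k\,\beta$; hence
$$\mathrm{Vol}\bigl(S\cap[0,m]^k\bigr) \le \tilde c_k\,\beta.$$
Now recount the complement of $S$. Outside the wedge $\widetilde T(1,k,\alpha)$, the set $[0,1]^k\setminus S$ has volume at least $\bigl(\mathrm{Vol}([0,m]^k) - \mathrm{Vol}(S\cap[0,m]^k)\bigr) \ge \beta - \tilde c_k\beta = (1-\tilde c_k)\beta$ inside the small box alone, while the wedge itself contributes its full volume $1 - f(\beta)$ (using $\widetilde T \cap [0,m]^k = \emptyset$ since $m^k=\beta$). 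Adding the half-volume loss forced on \emph{each} corner box as before, and summing the two contributions — one from the ``multiplicative simplex'' of Lemma 2.2 truncated at level $k$, one from the recursive deficit in the inner box — leads after simplification to the inequality
$$\alpha \le \tfrac12 - \alpha + \sum_{i=k}^{\infty}\frac{1}{i!}\left(\ln\frac{1}{2-2\alpha}\right)^i,$$
where the infinite tail replaces the finite sum $\sum_{i=0}^{k-1}$ precisely because iterating the box-within-a-box argument converts the truncation error of the Maclaurin series for $e^{-\ln\beta} = 1/\beta$ into the omitted high-order terms. The monotonicity computation $f'(\beta) = \frac{1}{(k-1)!}(\ln\frac1\beta)^{k-1}>0$ from the proof of Theorem 1.3, together with the analogous sign analysis for the new right-hand side (the tail sum is increasing in $\alpha$ on $(1/2,1)$ while $\frac12-\alpha$ is decreasing), shows the equation has a unique root $\alpha^{**}\in(1/2,1)$, and $\alpha\le\alpha^{**}$.

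The main obstacle is making the iteration rigorous and showing that it converges exactly to the stated infinite-series identity rather than merely improving the bound by one step. One has to be careful that applying the inner-box estimate with $\tilde c_k$ on the right is circular unless it is set up as a genuine fixed-point / monotone-iteration argument: one defines a decreasing sequence of candidate upper bounds $\alpha^{(0)} \ge \alpha^{(1)} \ge \cdots$, where $\alpha^{(0)}$ is the bound of Theorem 5.3 and each $\alpha^{(j+1)}$ is obtained by feeding $\alpha^{(j)}$ into the inner-box inequality, and then shows the limit is exactly $\alpha^{**}$ by checking that $\alpha^{**}$ is a fixed point and that the iteration map is a contraction near it (or simply that the sequence is bounded below by $\alpha^{**}$ and the recursion is monotone). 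The bookkeeping that turns ``lose half of each corner box, lose everything in the wedge, lose the recursive deficit in the inner box'' into the clean tail $\sum_{i\ge k}$ is where the real work lies; I expect it to come down to the identity $\beta\sum_{i=0}^{\infty}\frac1{i!}(\ln\frac1\beta)^i = \beta\cdot\frac1\beta = 1$, so that the finite sum in Theorem 5.3 plus the geometric tail generated by the iteration reassemble into $1$, and the residual terms are exactly $\sum_{i\ge k}$.
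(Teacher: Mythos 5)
Your geometric idea is the same as the paper's: the box $[0,(2-2\alpha)^{1/k}]^k$ is disjoint from the excluded wedge $\widetilde{T}$, so a sumfree set of volume $\alpha$ must be very dense inside it, and rescaling that corner sets up a recursion. The paper packages the recursion as iterating the density map $\varphi_k(\alpha)=\frac{\mathrm{Vol}(\widetilde{T})}{2-2\alpha}+\frac12$: from a set of volume $\alpha$ one extracts a sumfree subset of $[0,1]^k$ of volume at least $\varphi_k(\alpha)$, and any $\alpha$ beyond the unique root $\alpha^{**}$ of $\psi_k=\varphi_k-\mathrm{id}$ gets pushed past volume $1$. Your variant---iterating upper bounds on $\tilde{c}_k$ itself rather than iterating the extraction---is viable, repairs the circularity you flag, and in fact has the same fixed point. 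But as written the proposal stops exactly where the paper does its work, and two of the stated steps are not just unproved but wrong in form.

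First, the central claim that the bookkeeping ``leads after simplification to $\alpha\le\frac12-\alpha+\sum_{i\ge k}\frac1{i!}(\ln\frac1{2-2\alpha})^i$'' is asserted, not derived, and no pointwise inequality of that shape can come out of the argument: the tail sum vanishes at $\alpha=\frac12$, so such an inequality would already exclude sumfree sets of volume $\frac12$ (and the stripe sets of volume slightly above $\frac12$, which exist). The argument can only deliver the fixed-point conclusion $\alpha\le\alpha^{**}$, and if you actually solve your own one-step inequality $\alpha\le f(2-2\alpha)-(1-t)(2-2\alpha)$ at $t=\alpha$, with $f(\beta)=\beta\sum_{i=0}^{k-1}\frac1{i!}(\ln\frac1\beta)^i$, the simplification gives $\alpha=\frac12+\sum_{i\ge k}\frac1{i!}(\ln\frac1{2-2\alpha})^i$, i.e.\ $\varphi_k(\alpha)=\alpha$---which is the condition the paper's proof and its numerical table actually use (the displayed equation in the theorem statement does not match this, and because you skipped the computation you inherited the discrepancy rather than detecting it). Second, everything that makes the iteration conclusive is deferred: you must show the bound-improving map is monotone, strictly decreasing above $\alpha^{**}$, bounded below by $\alpha^{**}$, and that $\alpha^{**}$ is its only fixed point in $(1/2,1)$; this existence-and-uniqueness content is precisely what the paper supplies via $\psi_k(1/2)=0$, $\psi_k'(1/2)=-1<0$, $\psi_k''>0$ on $(1/2,1)$, and the blow-up of the tail as $\alpha\to1^-$, and nothing in your sketch establishes it. A minor but symptomatic slip: ``adding the half-volume loss forced on each corner box'' would double count, since that corner-box bound is exactly what produced the wedge exclusion in the first place. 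So: right mechanism, same in spirit as the paper, but the decisive simplification and the fixed-point/uniqueness analysis are missing.
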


\begin{proof}
If $\mbox{Vol}(S) = \alpha$, consider the set $S' = [0,(2-2\alpha)^{1/k}]^k \in [0,n]^k$, which is
disjoint from $\widetilde{T}(n,k, \alpha)$ except for a single point. Since $S$ cannot
intersect $\widetilde{T}(n,k, \alpha)$, the \emph{smallest} density $(S \cap S')/S'$ we can 
achieve is

\begin{eqnarray*}
\varphi_k(\alpha) & := & \frac{\alpha - (1 - \mathrm{Vol}(\widetilde{T}(n,k, \alpha)) - (2-2\alpha))}{(2-2\alpha)} \\
& = & \frac{\mathrm{Vol}(\widetilde{T}(n,k, \alpha))}{(2-2\alpha)} + \frac{1}{2} \\
& = & \frac{\alpha}{2-2\alpha} - \sum^{k-1}_{i=1} \frac{1}{i!}\left(\ln \frac{1}{2-2\alpha}\right)^i,
\end{eqnarray*}
where again we apply Lemma 2.2 in the final step.

If any $\varphi_k^m(\alpha) > 1$ (that is, the $m$th iteration of $\varphi_k$, not the $m$th
power), we have a contradiction. We wish to show that the function
$$\psi_k(\alpha) = \varphi_k(\alpha) - \alpha$$
has a unique root $\alpha^{**}$ on the interval $(0.5,1)$, and that any $\alpha > \alpha^{**}$
will grow larger than 1 through repeated application of $\psi_k$. First we observe that $\psi_k(0.5) = 0$.
On the interval $(1/2,1)$,
\begin{eqnarray*}
\psi_k(\alpha) & = & \frac{\alpha}{2-2\alpha}-\left(\frac{1}{2-2\alpha} - 1 - \sum^{\infty}_{i=k} 
\frac{1}{i!}\left(\ln \frac{1}{2-2\alpha}\right)^i \right) - \alpha \\
& = & \frac{1}{2} - \alpha + \sum^{\infty}_{i=k} \frac{1}{i!}\left(\ln \frac{1}{2-2\alpha}\right)^i.
\end{eqnarray*}

Next we determine the first and second derivatives:
\begin{eqnarray*}
\psi_k'(\alpha) & = &
\frac{1}{1-\alpha} \sum^{\infty}_{i=k-1} \frac{1}{i!}\left(\ln \frac{1}{2-2\alpha}\right)^i - 1. \\
\psi_k''(\alpha) & = &
\frac{1}{1-\alpha} \left(\sum^{\infty}_{i=k-2} \frac{1}{i!}\left(\ln \frac{1}{2-2\alpha}\right)^i \right) \\*
& & \hspace{15 mm}
+\frac{1}{(1-\alpha)^2} \left(\sum^{\infty}_{i=k-1} \frac{1}{i!}\left(\ln \frac{1}{2-2\alpha}\right)^i \right).
\end{eqnarray*}

Inspecting these derivatives, we see that $\psi_k'(1/2) = -1 < 0$, and $\psi_k''$ is positive on the interval
$(1/2,1)$. Thus, $\psi_k$ has at most one root on the interval. Finally, since the quantity $(\ln \frac{1}{2-2\alpha})$ approaches infinity as $\alpha$ approaches 1 from below, it is clear that 
$$\lim_{\alpha \to 1^-} \psi_k'(\alpha) = -\infty.$$

This implies that $\psi_k(\alpha) = \varphi_k(\alpha) - \alpha$ has a root $\alpha^{**} \in (1/2,1)$, and 
furthermore, since $\psi_k$ is increasing for $\alpha > \alpha^{**}$, iteration of $\varphi_k$ on any $\alpha > \alpha^{**}$
will eventually give a result larger than 1. Thus we must have $\tilde{c}_k \leq \alpha^{**}$.
\end{proof}

Figure 3 illustrates the method applied to prove Theorem 5.3, in which one region of the hypercube is ruled out, while Figure 4 illustrates the method of Theorem 5.4 , in which successive regions are removed from hypercubes of decreasing size.

\begin{figure}[h]
\begin{minipage}[b]{0.5\linewidth} 
\centering
\epsfig{file=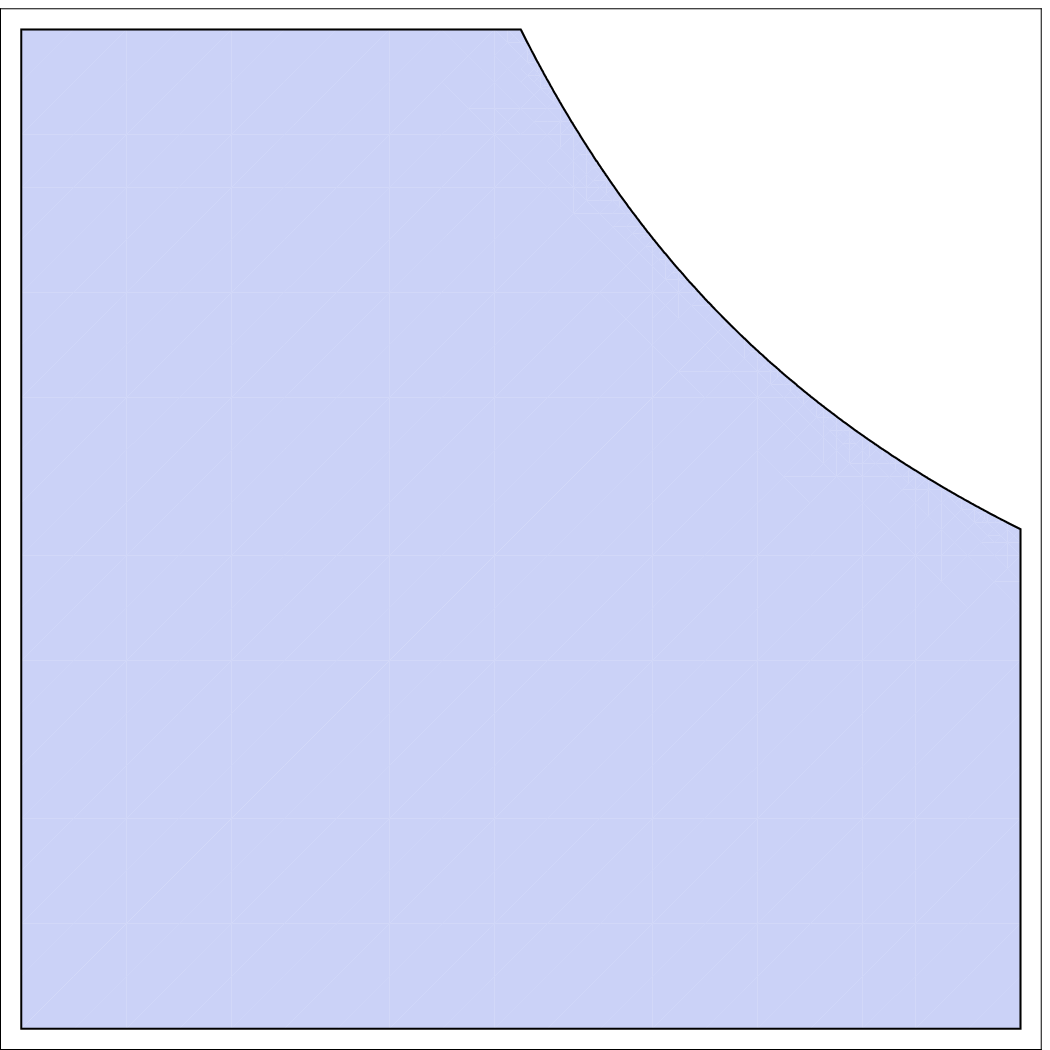, scale=0.5}
\caption{Method of Theorem 5.3}
\end{minipage}
\hspace{0.5cm} 
\begin{minipage}[b]{0.5\linewidth}
\centering
\epsfig{file=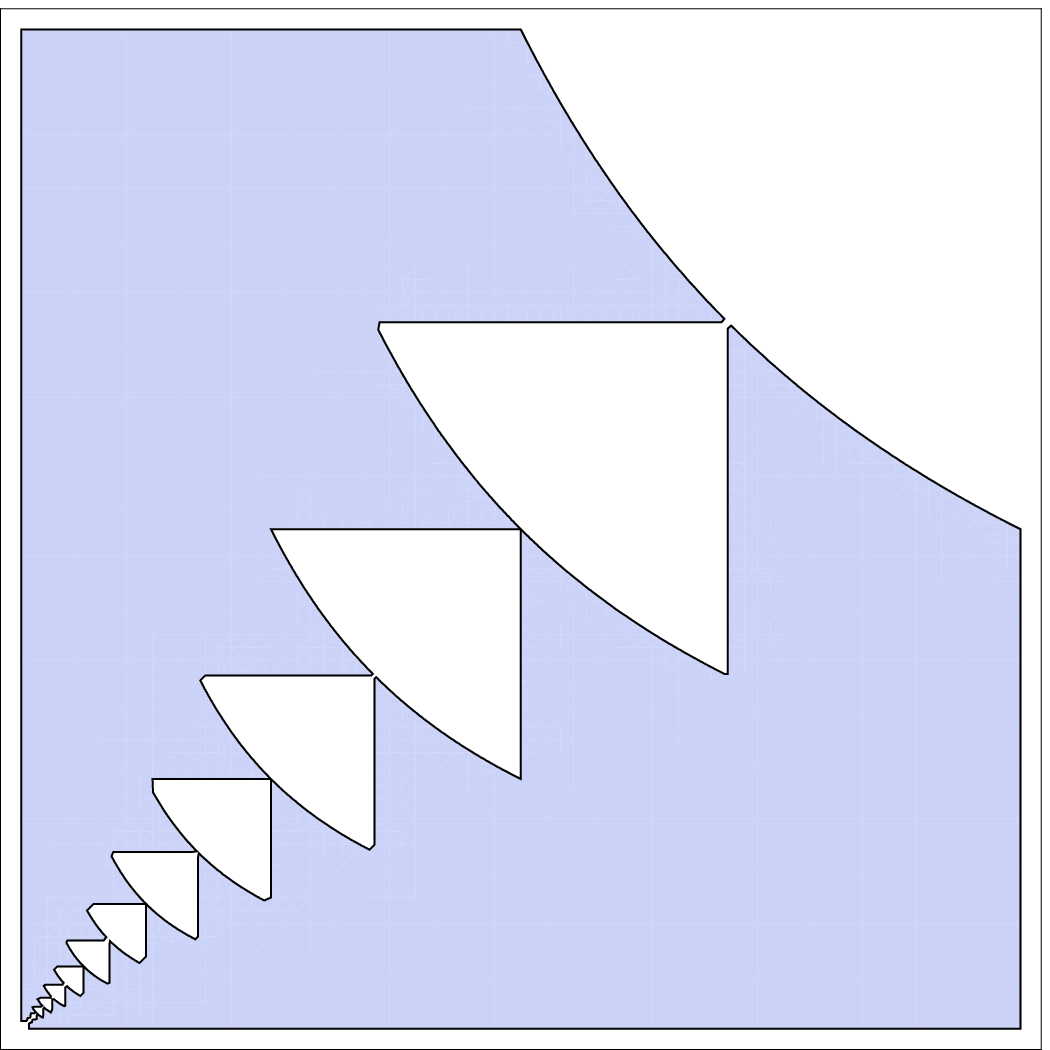, scale=0.5}
\caption{Method of Theorem 5.4}
\end{minipage}
\end{figure}

Theorem 5.4 yields the following bounds for $2 \leq k \leq 6$ (showing an improvement
in the upper bound compared to the data presented in Section 1):

\begin{eqnarray*}
0.555556 \leq & \tilde{c}_2 & \leq 0.727309 \\
0.666667 \leq & \tilde{c}_3 & \leq 0.840690 \\
0.740741 \leq & \tilde{c}_4 & \leq 0.899940 \\
0.796639 \leq & \tilde{c}_5 & \leq 0.935089 \\
0.838889 \leq & \tilde{c}_6 & \leq 0.957139 
\end{eqnarray*}

It seems possible that this technique may also be used to improve the upper bound in the
discrete case. However, the process of iteration creates serious obstacles in the translation 
of Theorem 5.4; every iteration introduces its own error term, and since the number of 
iterations is unbounded, the continuous proof is not sufficient in the discrete setting.

It is worth noting that while the constants $c_k$ and $\tilde{c}_k$ seem similar in nature 
(and indeed we apply similar methods when bounding them), there is no obvious
relation between them; it is not even clear which of these values is larger for a given $k$.

\section{Generalization to $l$-fold-sumfree sets}

A sumfree set $S$ is, by definition, a set such that $f(x,y,z) := x+y-z \neq 0$ for all
$x,y,z \in S$. We can generalize this definition by replacing $f$ with any other
linear form $f(x_1,\ldots,x_n)$ and considering sets such that this form is
nonzero for any $x_1,\ldots,x_n \in S$. 

As a natural generalization, we call $S$ an \emph{$l$-fold-sumfree set} if 
$$\forall x_1,\ldots,x_l, z \in S, \,
x_1 + x_2 + \cdots + x_l - z \neq 0,$$
or equivalently, using sumset notation, if
$$lA \cap A = \emptyset.$$
  We define
$$c_{k,l} := \limsup_{n \to \infty} \frac{1}{n^k} \max \{\#S:S \in \{1,\ldots,n\}^k 
\mbox{ is }l\mbox{-fold sumfree}\},$$
or in the continuous setting,
$$\tilde{c}_{k,l} :=  \max \{\mbox{Vol}(S):S \in [0,1]^k \mbox{ is measurable and }l\mbox{-fold-sumfree}\}.$$

\begin{remark}
In some of the literature (\cite{MR1441239}, for instance), these sets are simply referred to as $l$-sumfree. However, this description is used with various meanings (see \cite{MR2142128}), so we will use the term $l$-fold-sumfree for added clarity.
\end{remark}

As in the sumfree ($l = 2$) case, we can construct large sumfree sets using ``diagonal stripes'', leading to a similar lower bound.

\begin{theorem}
Defining $c_{k,l}$ and $\tilde{c}_{k,l}$ as above,

$$c_{k,l} \geq 1 - \frac{2}{k!}\sum^{\lfloor k/(l+1) \rfloor}_{i=0} (-1)^i \binom{k}{i} \left(\frac{k}{l+1}-i\right)^k,$$
$$\tilde{c}_{k,l} \geq 1 - \frac{2}{k!}\sum^{\lfloor k/(l+1) \rfloor}_{i=0} (-1)^i \binom{k}{i} \left(\frac{k}{l+1}-i\right)^k.$$
\end{theorem}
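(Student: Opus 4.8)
The plan is to mimic the proof of Theorem 1.1 essentially verbatim, with the single parameter change that the diagonal stripe must now avoid $l$-fold sums rather than $2$-fold sums. First I would observe that if $A \subset \{k,\ldots,kn\}$ is $l$-fold-sumfree, then $S = \bigcup_{a \in A} K_a$ (the union of cross-sections, in the notation of Section 3) is $l$-fold-sumfree in $\{1,\ldots,n\}^k$: if $x^{(1)},\ldots,x^{(l)},z$ all lie in $S$, then the coordinate-sum of $x^{(1)}+\cdots+x^{(l)}$ equals the sum of the $l$ corresponding elements of $A$, which is not in $A$, so that vector is not in $S$. Next I would choose the analogue of the interval $\{a+1,\ldots,2a\}$: the set $\{a+1,\ldots,la\}$ is $l$-fold-sumfree in the integers (the smallest possible $l$-fold sum of its elements is $l(a+1) = la + l > la$), so I would define
$$S(n,k,a) := \{(x_1,\ldots,x_k) \in \{1,\ldots,n\}^k : a \leq x_1 + \cdots + x_k < la\}$$
and its continuous counterpart $\widetilde{S}(n,k,a)$ over $[0,n]^k$, exactly as before but with $2a$ replaced by $la$.

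The second step is the volume computation. By Lemma 2.1 the volume of $\{x \in [0,1]^k : x_1 + \cdots + x_k < a\}$ is $V_1(k,a)$ as defined in the proof of Theorem 1.1, and by the same change of variables $x_i \mapsto 1 - x_i$ the volume of $\{x \in [0,1]^k : x_1 + \cdots + x_k > la\}$ is $V_2(k,a) := \frac{1}{k!}\sum_{i=0}^{\lfloor k - la \rfloor} (-1)^i \binom{k}{i}(k - la - i)^k$. Thus $\mathrm{Vol}(\widetilde{S}(1,k,a)) = 1 - V_1(k,a) - V_2(k,a)$. Choosing $a = k/(l+1)$ makes the two truncated sums coincide, because $k/(l+1) = k - l\cdot k/(l+1)$, so $V_1(k, k/(l+1)) = V_2(k, k/(l+1))$ and the volume becomes the clean expression $1 - \frac{2}{k!}\sum_{i=0}^{\lfloor k/(l+1) \rfloor}(-1)^i \binom{k}{i}(k/(l+1) - i)^k$.

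The third step transfers this to the two constants. For $\tilde{c}_{k,l}$ there is nothing further to do: $\widetilde{S}(1,k,k/(l+1))$ is itself a measurable $l$-fold-sumfree subset of $[0,1]^k$ of the stated volume (after rescaling the cube $[0,n]^k$ to $[0,1]^k$, which preserves the $l$-fold-sumfree property since scaling commutes with addition), so $\tilde{c}_{k,l}$ is at least that volume. For $c_{k,l}$, I would invoke Theorem 2.3 (Lang): all boundaries of $\widetilde{S}(n,k,kn/(l+1))$ are hyperplanes, hence Lipschitz-parametrizable, so $\#S(n,k,kn/(l+1)) = n^k \mathrm{Vol}(\widetilde{S}(1,k,k/(l+1))) + O(n^{k-1})$; dividing by $n^k$ and taking $\limsup$ gives the bound, exactly as in the displayed chain of equalities in the proof of Theorem 1.1.

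The only genuine point requiring care — and the step I would flag as the main obstacle, though it is minor — is verifying that $\{a+1,\ldots,la\}$ is actually $l$-fold-sumfree and nonempty for the relevant range of $a$, and more importantly that $a = k/(l+1)$ is an admissible (and near-optimal) choice for every $l \geq 2$; one must check $k/(l+1) \geq k$ fails — i.e., that $a < ka$ in the coordinate-sum range makes sense — and that $\lfloor k - l a \rfloor = \lfloor a \rfloor$ when $a = k/(l+1)$ so the two volume sums really are identical term by term. Since $k - l\cdot\frac{k}{l+1} = \frac{k}{l+1} = a$ exactly, this identity is automatic, and the whole argument goes through with no new ideas beyond those already used for $l = 2$.
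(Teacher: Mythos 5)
Your proposal is correct and is essentially the paper's own argument: the paper likewise takes the diagonal stripe whose coordinate sum runs from the $\tfrac{1}{l+1}$ fraction to the $\tfrac{l}{l+1}$ fraction of the maximum (i.e.\ $[a,la)$ with $a=k/(l+1)$), computes its volume via Lemma 2.1 using the symmetry that makes the two removed simplices equal, and transfers to the discrete constant exactly as in Theorem 1.1. Your write-up just makes explicit the details (the $l$-fold-sumfree check for the stripe and the Lang approximation) that the paper compresses into a one-line reference to the proof of Theorem 1.1.
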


\begin{proof}
We follow the proof of Theorem 1.1, except now we use the $l$-fold-sumfree set
$$\widetilde{S}(n,k,a) := \left\{(x_1,\ldots,x_k) \in [0,n]^k:\frac{1}{l+1} \leq x_1 + \cdots + x_k < \frac{l}{l+1}\right\}.$$
The given bound is the volume of this set by Lemma 2.1.
\end{proof}

\begin{corollary}
Defining $c_{k,l}$ and $\tilde{c}_{k,l}$ as above,
$$\lim_{k \to \infty} c_{k,l} = \lim_{k \to \infty} \tilde{c}_{k,l} = 1.$$
\end{corollary}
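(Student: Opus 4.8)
The plan is to reduce the $l$-fold case to the $l=2$ analysis already carried out for Corollary 1.2, using the lower bound of Theorem 6.1. By that theorem it suffices to show that
$$\lim_{k \to \infty} \frac{1}{k!}\sum^{\lfloor k/(l+1) \rfloor}_{i=0} (-1)^i \binom{k}{i} \left(\frac{k}{l+1}-i\right)^k = 0,$$
since then both $c_{k,l}$ and $\tilde{c}_{k,l}$ are squeezed between this expression (approaching $1$) and $1$. The strategy is to imitate the proof of Corollary 1.2 verbatim, replacing the role of $1/3$ by $1/(l+1)$ throughout.

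First I would establish the analogue of Lemma 3.1: if $0 < a < b < \tfrac{1}{2}$ and $\tfrac{1}{l+1} - a < b^b(1-b)^{(1-b)}/e$, then $\tfrac{1}{k!}\sum_{i=\lceil ak\rceil}^{\lfloor bk\rfloor}\binom{k}{i}(\tfrac{k}{l+1}-i)^k \to 0$. The proof is identical to that of Lemma 3.1: bound each term above by $\binom{k}{bk}(\tfrac{k}{l+1}-ak)^k$, note there are at most $(b-a+1)k$ terms, apply Stirling's approximation to $\binom{k}{bk} = k!/((bk)!(k-bk)!)$, and observe that the resulting geometric ratio $\left(\tfrac{1}{l+1}-a\right)e / (b^b(1-b)^{(1-b)})$ is strictly less than $1$ by hypothesis.

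Next I would define the sequence $a_0 = \tfrac{1}{l+1}$, $a_{j+1} = \tfrac{1}{l+1} - a_j^{a_j}(1-a_j)^{(1-a_j)}/e$, and argue that for each fixed $l$ it reaches a negative value after finitely many steps (so that the index range $[0,\lfloor k/(l+1)\rfloor]$ is covered by finitely many windows $[\lceil a_{j+1}k\rceil, \lfloor a_j k\rfloor]$), while the intermediate $a_j$ are irrational so that no lattice point is double-counted or skipped at the window boundaries. Summing the finitely many windows, each of which tends to $0$ by the Lemma, shows the full alternating sum tends to $0$, hence the lower bound tends to $1$, hence $c_{k,l}, \tilde{c}_{k,l} \to 1$.

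The main obstacle is verifying that the iteration $a_{j+1} = \tfrac{1}{l+1} - a_j^{a_j}(1-a_j)^{(1-a_j)}/e$ actually terminates at a negative value for every $l \geq 2$, and that the intermediate terms are irrational. For $l=2$ this was checked numerically (seven steps), but for general $l$ one needs a uniform argument: since $x^x(1-x)^{(1-x)} \geq 1/\sqrt{2}$ for $x \in [0,1]$ (minimum at $x = 1/2$), each step decreases $a_j$ by at least $1/(e\sqrt{2}) \approx 0.26$ once $a_j$ is in range, so the sequence decreases by a bounded-below amount each step until it goes negative; irrationality of the finitely many intermediate values for each $l$ follows from transcendence of $e$ (each $a_j$ with $j \geq 1$ is a rational combination of $1$ and $1/e$ times algebraic quantities, hence irrational) or, failing a clean general argument, can simply be perturbed: replace each $a_j$ by a nearby irrational still satisfying the strict inequality, which is always possible since the hypotheses of the Lemma are open conditions. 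This perturbation remark is the cleanest way to sidestep any number-theoretic subtlety and will be the point I emphasize in writing up the argument.
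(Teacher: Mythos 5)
Your overall reduction (it suffices to show that $\frac{1}{k!}\sum_{i=0}^{\lfloor k/(l+1)\rfloor}\binom{k}{i}\bigl(\frac{k}{l+1}-i\bigr)^k \to 0$, since $c_{k,l},\tilde c_{k,l}\le 1$ trivially) is fine, but note that the paper does not redo any asymptotics at all: its proof of this corollary is a two-line containment argument. For $l\ge 2$ the stripe $\frac{1}{l+1}\le x_1+\cdots+x_k<\frac{l}{l+1}$ contains the $l=2$ stripe $\frac13\le x_1+\cdots+x_k<\frac23$, so the Theorem 6.1 lower bound is at least the Theorem 1.1 lower bound, which tends to $1$ by Corollary 1.2. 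Your plan instead re-runs the entire window/Stirling machinery with $\frac13$ replaced by $\frac{1}{l+1}$; this route can be made to work, but as written it has a genuine gap precisely at the step you flag as the main obstacle.

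The termination argument for the iteration $a_{j+1}=\frac{1}{l+1}-a_j^{a_j}(1-a_j)^{1-a_j}/e$ is not correct. First, $\min_{x\in[0,1]} x^x(1-x)^{1-x}=\frac12$ (attained at $x=\frac12$), not $\frac{1}{\sqrt2}$. More seriously, the per-step decrease is \emph{not} $a_j^{a_j}(1-a_j)^{1-a_j}/e$: that holds only for the first step, since for $j\ge 1$ one has $a_j-a_{j+1}=\bigl(f(a_j)-f(a_{j-1})\bigr)/e$ with $f(x)=x^x(1-x)^{1-x}$, which can be far smaller than $1/(2e)$ — indeed for $l=2$ the paper's sequence needs seven steps to cross an interval of length $\frac13$, so a guaranteed decrement of $0.26$ (or even $0.18$) per step is demonstrably false. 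What is true is that the sequence is strictly decreasing while it remains in $[0,\frac12)$, so if it never became negative it would converge to a fixed point $x^*$ with $x^*+f(x^*)/e=\frac{1}{l+1}\le\frac13$; ruling this out requires a separate estimate (the minimum of $x+f(x)/e$ on $[0,\frac13]$ is about $0.35>\frac13$) or a finite check for each $l$, as the paper does numerically for $l=2$. A much simpler completion, which is really the paper's argument transplanted from volumes to sums, is termwise domination: for $0\le i\le\lfloor k/(l+1)\rfloor$ we have $0\le\frac{k}{l+1}-i\le\frac{k}{3}-i$, so your nonnegative sum is bounded by the $l=2$ sum already shown to vanish in the proof of Corollary 1.2, and no new iteration or Stirling analysis is needed. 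Your irrationality/perturbation remark is fine (and perturbing the $a_j$ also restores the strict inequality demanded by the Lemma, which the unperturbed sequence satisfies only with equality), but it does not repair the termination step, which is the real gap.
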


\begin{proof}
The lower bound for $c_{k,l}$ and $\tilde{c}_{k,l}$ given in Theorem 6.1 is larger than the lower bound for $c_k$ given in Theorem 1.1 (as it is the volume of a larger region). Since the previous bound approaches 1 as $k$ grows large, this one does as well.
\end{proof}

Our upper bound does not extend as easily. Adapting our methods, we can deal with the $l = 3$ case, and in fact find an upper bound which is both explicit and reasonably effective; however, it is not evident how to deal with any of the cases where $l \geq 4$.

\begin{theorem}
Defining $c_{k,l}$ and $\tilde{c}_{k,l}$ as above,
$$c_{k,3} \leq 1 - \frac{1}{(1+2^{1/k})^k},$$
$$\tilde{c}_{k,3} \leq 1 - \frac{1}{(1+2^{1/k})^k}.$$
\end{theorem}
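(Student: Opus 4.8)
The plan is to adapt the combinatorial argument from the proof of Theorem 1.3 to the $3$-fold-sumfree condition $3A \cap A = \emptyset$. Let $S$ be $3$-fold-sumfree in $\{1,\ldots,n\}^k$ with $\#S \geq \alpha n^k$, and fix an element $b = (b_1,\ldots,b_k) \in S$. The key observation is that for every triple $(x,y,w)$ of elements of $\{1,\ldots,n\}^k$ with $x+y+w = b$, at least one of $x,y,w$ must fail to lie in $S$ (otherwise $b \in 3S \cap S$). I would count disjoint such triples: choosing $x$ and $y$ freely in a suitable box forces $w$, and the number of ordered triples $(x,y,w)$ with all coordinates positive and summing to $b$ is $\prod_{i=1}^k \binom{b_i-1}{2} \cdot (\text{multinomial factor})$; more usefully, I want a \emph{partition} of a large subset of such configurations into disjoint blocks each of which must omit an element of $S$. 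The cleanest route: the set of $w$ that arise as $b - x - y$ for $x,y \geq \mathbf{1}$ is exactly $\{w : \mathbf{1} \leq w, \ w_i \leq b_i - 2\}$, and for each such $w$ the pair $(x,y)$ summing to $b-w$ can be taken disjointly (as in Theorem 1.3, roughly $\frac12 \prod (b_i - w_i - 1)$ disjoint pairs). This is getting complicated, so instead I would mimic the structure that yields the stated clean bound.

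Working backwards from the answer $1 - (1+2^{1/k})^{-k}$: setting $\beta = 1-\alpha$ (the forbidden density), the claim is equivalent to $\beta \geq (1+2^{1/k})^{-k}$ at the threshold, i.e. $\beta^{-1/k} \leq 1 + 2^{1/k}$, i.e. $(2\beta)^{1/k} + \beta^{1/k} \leq 1$ fails — so the extremal relation is $\beta^{1/k} + (2\beta)^{1/k} = 1$, equivalently $\beta^{1/k}(1 + 2^{1/k}) = 1$. This strongly suggests the following geometric picture: if $b \in S$, then the box $[0,b_1]\times\cdots\times[0,b_k]$ cannot contain two disjoint-in-the-right-sense copies related to $S$; rather, one can split it so that a subbox of volume $\approx 2 \cdot \mathrm{Vol}(S \cap \text{box})$ worth of points is excluded, forcing $b_1\cdots b_k \leq \tfrac{1}{?}\,n^k$. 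Concretely: for $b \in S$, the reflected-and-halved set argument of Theorem 5.3 gives $\mathrm{Vol}(S \cap \text{box}) \leq \tfrac12 b_1\cdots b_k$ for $l=2$; for $l=3$ one instead uses that $S \cap \text{box}$, its translate, and a \emph{scaled} copy (scaling by the factor coming from $x+y+w=b$ with $x=y$) are disjoint, yielding $\mathrm{Vol}(S\cap\text{box})(1 + (\tfrac12)^k \cdot 2) \leq$ something — the factor $2^{1/k}$ per coordinate is exactly what appears. So the plan: show for each $b \in S$ that $b_1\cdots b_k \leq \beta n^k$ with $\beta$ forced by a packing of $S\cap\text{box}$ together with a reflection and a $\tfrac12$-dilate inside the box.

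Then, exactly as in Theorem 1.3, the excluded region is $\widetilde T(n,k,\alpha) = \{b : b_1\cdots b_k > \beta n^k\}$ with $\beta = \beta(\alpha)$, whose volume is computed by Lemma 2.2 as $n^k(1 - \beta\sum_{i=0}^{k-1}\tfrac{1}{i!}(\ln\tfrac1\beta)^i)$. The contradiction-avoiding inequality becomes $\alpha \leq f(\beta)$ where $f$ is the monotone function from Theorem 1.3, and $\beta$ is now a (different) explicit function of $\alpha$; one solves $\,\alpha = f(\beta(\alpha))\,$ for the threshold. The punchline is that with the $l=3$ value of $\beta(\alpha)$ this transcendental equation collapses to the closed form $\alpha^* = 1 - (1+2^{1/k})^{-k}$, presumably because the relevant $\beta$ makes $\ln\tfrac1\beta$ take a value at which the truncated exponential series telescopes — I would verify $\beta = (1+2^{1/k})^{-k}$ satisfies $1 - \beta\sum_{i=0}^{k-1}\tfrac{1}{i!}(\ln\tfrac1\beta)^i = \alpha^*$ with $\alpha^* = 1-\beta$, which is the Lemma 2.2 identity read in reverse.

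The main obstacle I anticipate is pinning down the exact combinatorial/geometric statement ``$b \in S \implies b_1\cdots b_k \leq \beta n^k$'' with the correct constant $\beta = (1+2^{1/k})^{-k}$ for $l = 3$: the $l=2$ reflection argument does not obviously generalize, and one must find the right family of disjoint transformed copies of $S$ inside the box whose total measure forces this bound (the appearance of $2^{1/k}$ rather than, say, $3^{1/k}$ or $2$ is the subtle point, and presumably reflects that only one extra dilated copy, not two, can be guaranteed disjoint). Verifying that the resulting equation has the claimed closed-form root, and that the root lies in the right interval so Theorem 2.3's error terms wash out in the limit (for $c_{k,3}$) or are absent (for $\tilde c_{k,3}$), should then be routine given Lemma 2.2 and the monotonicity already established for $f$.
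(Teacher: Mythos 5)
Your proposal stops short exactly where the real work lies, and you say so yourself: the claim that $b\in S$ forces $b_1\cdots b_k\le \beta n^k$ with the specific constant $\beta=(1+2^{1/k})^{-k}$ is never proved, and no family of ``disjoint transformed copies'' inside the box below $b$ is exhibited. Moreover, the fallback route you sketch (feed a per-element exclusion into the wedge $\widetilde{T}$ and Lemma 2.2, then hope the equation $\alpha=f(\beta(\alpha))$ collapses to a closed form) cannot work as described: the verification you propose, that $\beta=(1+2^{1/k})^{-k}$ satisfies $1-\beta\sum_{i=0}^{k-1}\frac{1}{i!}\left(\ln\frac{1}{\beta}\right)^i=1-\beta$, would force $\sum_{i=0}^{k-1}\frac{1}{i!}\left(\ln\frac{1}{\beta}\right)^i=1$, which holds only for $\beta=1$. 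The closed form in Theorem 6.3 does not come from the truncated exponential series at all; there are no logarithms anywhere in the actual argument, and no analogue of the region $\widetilde{T}$ is used.

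The paper's proof is a different, global argument, carried out first in $[0,1]^k$. With $\mathrm{Vol}(S)=\alpha$ and any $\gamma$ satisfying $(1-\alpha)^{1/k}<\gamma<\frac{1}{2}$, compare $A_2=S\cap[1-\gamma,1]^k$ with the translate of $A_1=S\cap[0,\gamma]^k$ by $a=(1-\gamma,\ldots,1-\gamma)$: if these were disjoint inside the corner cube of volume $\gamma^k$, one would get $\alpha\le 1-\gamma^k$, contradicting the choice of $\gamma$. Hence there exist $w,z\in S$ with $z-w=a$, so $S$ can contain no pair $x,y$ with $x+y=a$ (else $w+x+y=z$ violates $3$-fold-sumfreeness), and the reflection argument of Theorem 5.3 applied to the box $[0,1-\gamma]^k$ gives $\mathrm{Vol}(S\cap[0,1-\gamma]^k)\le\frac{1}{2}(1-\gamma)^k$, hence $\alpha\le 1-\frac{1}{2}(1-\gamma)^k$. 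Combined with $\alpha>1-\gamma^k$ this yields $\frac{1}{2}(1-\gamma)^k<\gamma^k$, i.e.\ $\gamma>(1+2^{1/k})^{-1}$; letting $\gamma$ decrease to $(1-\alpha)^{1/k}$ gives $\alpha\le 1-(1+2^{1/k})^{-k}$, and the discrete bound $c_{k,3}$ follows by the usual lattice-point approximation of this same argument, not via Lemma 2.2. So your reverse engineering did identify the correct extremal relation $\gamma(1+2^{1/k})=1$ (the $2^{1/k}$ comes from comparing the half-empty lower box with the upper corner cube, not from a dilate of $S$), but the missing lemma you could not pin down is precisely the content of the proof, and as it stands the proposal has a genuine gap.
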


\begin{proof}

Let $S$ be an $l$-fold-sumfree subset of $[0,1]^k$ with $\mathrm{Vol}(S) = \alpha$.
We define the sets $$A_1 = S \cap [0,\gamma]^k,$$ $$A_2 = S \cap [1-\gamma,1]^k,$$ 
where $(1-\alpha)^{1/k}< \gamma < \frac{1}{2}$.

Consider the element
$$a:= (1-\gamma,1-\gamma,\ldots,1-\gamma)$$
and suppose that the sets $A_2$ and the translation $A_1 + a \subset [1-\gamma,1]^k$ 
are disjoint. Then
\begin{eqnarray*}
\alpha & \leq & (1 - 2\gamma) + \mathrm{Vol}(A_1) + \mathrm{Vol}(A_2) \\
& = & (1 - 2\gamma) + \mathrm{Vol}(A_1 + a) + \mathrm{Vol}(A_2) \\
& \leq & (1 - 2\gamma) + \gamma = 1 - \gamma,
\end{eqnarray*}
which contradicts our assumption on $\gamma$. 

Thus, there exist elements $w,z \in S$ such that $w+a = z$. This means $S$ cannot contain any
pair of elements $x,y \in S$ such that $x+y=a$, or else we would have $w+x+y=z$, a contradiction
since $S$ is $l$-fold-sumfree. Then, as in the proof of Theorem 5.3, we must have
$$\mathrm{Vol}(S \cap [1-\gamma]^k) \leq \frac{1}{2}(1-\gamma)^k.$$
Using this result and the restriction on $\gamma$,
$$1 - \gamma^k < \alpha \leq 1 - \frac{1}{2}(1-\gamma)^k,$$
and thus
\begin{eqnarray*}
\frac{1}{2}(1-\gamma)^k & < & \gamma^k \\
(1-\gamma) & < & \gamma \cdot 2^{1/k} \\
\frac{1}{2^{1/k}} & < & \gamma.
\end{eqnarray*}

Finally, we use the bound on $\gamma$ to bound $\alpha$ (and thus $\tilde{c}_{k,3}$):
$$\alpha \leq 1- \frac{1}{2}(1-\gamma)^k < 1 - \frac{1}{(1+2^{1/k})^k}.$$

We achieve the upper bound for $c_{k,3}$ using the same sort of integral approximation
technique we applied to Theorems 1.1 and 1.3. The process is virtually identical, so we
omit the details here.
\end{proof}

Theorems 6.1 and 6.3 give us lower and upper bounds for $\tilde{c}_{k,3}$; looking at the cases
where $2 \leq k \leq 6$, we get the following results:

\begin{eqnarray*}
0.750000 \leq & \tilde{c}_{2,3} & \leq 0.828427 \\
0.859375 \leq & \tilde{c}_{3,3} & \leq 0.913360 \\
0.916667 \leq & \tilde{c}_{4,3} & \leq 0.956464 \\
0.949219 \leq & \tilde{c}_{5,3} & \leq 0.978167 \\
0.968620 \leq & \tilde{c}_{6,3} & \leq 0.989061 
\end{eqnarray*}

These bounds illustrate that for 3-fold-sumfree sets, the largest ``diagonal stripe'' sets
have very close to maximal size.

\section{Concluding remarks}

All of the large sumfree (and $l$-fold-sumfree) sets we have constructed are unions of
sets of the form $K_a$ as defined in Section 3. These are certainly the simplest sets to
grasp, but there is no guarantee that the largest sumfree sets have this structure. 

If we limit ourselves to these $K_a$-unions, the problem is simplified to choosing an
optimal sumfree set $A \subset \{k,\ldots,kn\}$. (To conserve space in this section, we 
will use the discrete notation to discuss both the continuous and discrete problems.) 
Since the sets $K_a$ are not of equal size, this is a different task than finding a 
large sumfree set $A$. This suggests a more general combinatorial problem.

\begin{question}
Given an additive set with a weight assigned to each element, what methods can we use
to construct sumfree (resp. $l$-fold-sumfree) sets that maximize the sum of the weights of the elements?
\end{question}

On the other hand, if we relax this structural constraint, we know virtually nothing about whether the upper bound on the size increases.

\begin{question}
Are there optimally large sumfree (resp. $l$-fold-sumfree) subsets of $\{1,\ldots,n\}^k$ which are not 
the union of  ``cross-section'' sets?
\end{question}

Addressing both of these questions would solve the problems we have been studying. Question 7.1 is unlikely to have an answer in full generality, although if the weight distribution is highly structured, as it in this context, there may be methods of approach.

\section{Acknowledgments}
I would like to thank Joseph H. Silverman and Steven J. Miller for their many useful suggestions about the content and presentation of this paper.

\bibliographystyle{plain}
\bibliography{mysources}

\begin{thebibliography}{10}

\bibitem{MR2134182}
Andreas Baltz, Peter Hegarty, Jonas Knape, Urban Larsson, and Tomasz Schoen.
\newblock The structure of maximum subsets of {$\{1,\dots,n\}$} with no
  solutions to {$a+b=kc$}.
\newblock {\em Electron. J. Combin.}, 12:Research Paper 19, 16 pp.
  (electronic), 2005.

\bibitem{MR1441239}
Jean Bourgain.
\newblock Estimates related to sumfree subsets of sets of integers.
\newblock {\em Israel J. Math.}, 97:71--92, 1997.

\bibitem{MR1701193}
Jean-Marc Deshouillers, Gregory~A. Freiman, Vera S{\'o}s, and Mikhail Temkin.
\newblock On the structure of sum-free sets. {II}.
\newblock {\em Ast\'erisque}, (258):xii, 149--161, 1999.
\newblock Structure theory of set addition.

\bibitem{MR0210154}
William Feller.
\newblock {\em An introduction to probability theory and its applications.
  {V}ol. {II}}.
\newblock John Wiley \& Sons Inc., New York, 1966.

\bibitem{MR2166359}
Ben Green and Imre~Z. Ruzsa.
\newblock Sum-free sets in abelian groups.
\newblock {\em Israel J. Math.}, 147:157--188, 2005.

\bibitem{MR1656927}
Kiran~S. Kedlaya.
\newblock Product-free subsets of groups.
\newblock {\em Amer. Math. Monthly}, 105(10):900--906, 1998.

\bibitem{MR1282723}
Serge Lang.
\newblock {\em Algebraic number theory}, volume 110 of {\em Graduate Texts in
  Mathematics}.
\newblock Springer-Verlag, New York, second edition, 1994.

\bibitem{MR2254541}
Vsevolod~F. Lev.
\newblock Large sum-free sets in {$\Bbb Z/p\Bbb Z$}.
\newblock {\em Israel J. Math.}, 154:221--233, 2006.

\bibitem{Schur1916}
Issai Schur.
\newblock \"{U}ber die kongruenz $x^m + y^m = z^m (\mathrm{mod} p)$.
\newblock {\em Jahresber. Deutsche Math.-Verein.}, 25:114--117, 1916.

\bibitem{MR2142128}
E.~Szemer{\'e}di and V.~H. Vu.
\newblock Long arithmetic progressions in sum-sets and the number of
  {$x$}-sum-free sets.
\newblock {\em Proc. London Math. Soc. (3)}, 90(2):273--296, 2005.

\end{thebibliography}

\textsc{Department of Mathematics, Brown University, Box 1917, Providence, RI 02912-1917}

\emph{E-mail address}: \texttt{thedan@math.brown.edu}

\end{document}